\newtheorem{theorem}{Theorem}[section]
\newtheorem{lemma}[theorem]{Lemma}
\newtheorem{corollary}[theorem]{Corollary}
\newtheorem{proposition}[theorem]{Proposition}
\theoremstyle{definition}   
\newtheorem{definition}{Definition}
\newtheorem{example}[theorem]{Example}
\theoremstyle{remark}
\numberwithin{equation}{section}
\begin{document}

\title{Colon structure of 
associated primes of monomial ideals}

\author{Siddhi Balu Ambhore 
\and 
Indranath Sengupta}

\date{}

\address{\small \rm  Discipline of Mathematics, IIT Gandhinagar, Palaj, Gandhinagar, 
Gujarat 382355, INDIA.}
\email{siddhi.ambhore@iitgn.ac.in}
\thanks{This paper is the outcome of the research work carried out by the 
first author at IIT Gandhinagar, first as a student of M.Sc., then as a 
research intern in \textit{e-SRIP} 2020, and finally as a 
\textit{Sabarmati Bridge Fellow}. This work has been done under the 
supervision of the second author. Both the authors gratefully acknowledge 
the encouragement and support received from IIT Gandhinagar.
}

\address{\small \rm  Discipline of Mathematics, IIT Gandhinagar, Palaj, Gandhinagar, 
Gujarat 382355, INDIA.}
\email{indranathsg@iitgn.ac.in}
\thanks{The second author is the corresponding author; supported by the 
MATRICS research grant MTR/2018/000420, sponsored by the SERB, Government of India.}

\date{}

\subjclass[2010]{05E40, 13P99, 13-04}
\keywords{Monomial ideals, associated primes, colon ideals.}

\allowdisplaybreaks

\begin{abstract}
We find an explicit expression of the associated primes of 
monomial ideals as a colon by an element $v$, using the unique irredundant 
irreducible decomposition whose irreducible components are monomial
ideals (Theorem 3.1). An algorithm to compute $v$ is given using Macaulay2
(Section 7).
For squarefree monomial ideals the problem is related to the combinatorics
of the underlying clutter or graph (Proposition 4.3). For ideals
of Borel type the monomial $f$ takes a simpler form (Proposition 5.2). The
authors classify when $f$ is unique (Proposition 6.2).
\end{abstract}

\maketitle

\section{Introduction}
An important result in commutative algebra states that 
an associated prime ideal $P$ of a monomial ideal $I$ can be expressed 
as $P = (I \colon v)$, for a monomial $v$; see \cite{hh}. 
This paper provides an explicit form of the monomial $v$, for 
any given monomial ideal $I$ and an associated prime ideal $P$, using the  irredundant irreducible decomposition of $I$. For example, 
for a monomial ideal $I = \langle x_1^4,x_2^7,x_3^5,x_1^3x_4^2,x_2^4x_4^2,x_3x_4^2,x_4^5,x_4^2x_8^2,x_1x_8^8 \rangle \subset S = \mathbb{Q}[x_1, \dots, x_8]$ and $P = \langle x_1, x_2,x_3,x_4 \rangle$, we can have $v = x_2^6x_3^4x_4x_5^5x_6^5x_7^2x_8^{13}$ which gives us $P = (I \colon v)$. We have illustarted this example in section 7, where we have given an \emph{Macaulay2} algorithm to compute such $v$'s using theorem 3.1. We also study the uniqueness of such a $v$, and prove conditions for its uniqueness. We treat the cases of 
squarefree monomial ideals and ideals of Borel type separately to 
show that $v$ appears in a much simpler form. Another interesting 
class of monomial ideals which has been investigated closely is the class of 
edge ideals associated with clutters and simple finite graphs. R.H. Villarreal 
has drawn our attention to the notion of $\mbox{v}$-number of an ideal 
$I$ (see \cite{vn}), which is nicely related to the monomial $v$ 
that is our object of study. This work cannot be extended directly 
for binomial 
ideals for the reason that a binomial irreducible decomposition for 
binomial ideals may not always exist; see \cite{ex}. However, it 
is known that if  $I \subset S$ is a graded ideal and $P \in \mbox{Ass}(I)$, 
then there is ahomogeneous polynomial $f$ in $S$ such that $(I: f) = P$. There is an algorithm
to compute $f$ [\cite{R1}, Example 5.1, Procedure A.1]. In particular, this algorithm
applies to monomial and binomial ideals.

\section{Preliminaries}
Let $S  = K[x_1, \dots, x_n]$ be a polynomial ring in $n$ variable over 
the field $K$. A product $x_1^{a_1}\cdots x_n^{a_n}$, with 
$a_i \in \mathbb{Z}_{\geq 0}$, is called a \textit{monomial}. 
If $u = x_1^{a_1}\cdots x_n^{a_n}$ is a monomial, then we write 
$u = \mathbf{x^a}$, with 
$\mathbf{a} = (a_1,\dots,a_n) \in (\mathbb{Z}_{\geq 0})^n$. 
The $n$-tuple $\mathbf{a} = (a_1,\dots,a_n)$ is called the 
\textit{index} of monomial $u$ and it is denoted as $\mbox{index}(u)$. 
Let us define $\mbox{floor}(u) := \{i \mid x_{i} \mid u\}$. The 
monomial $\displaystyle{\prod_{i\in\mbox{floor}(u)}}x_i$ 
is called the \textit{base} of $u$, denoted by $\mbox{base}(u)$. 
Let $u$ be a monomial. Let $\nu_i(u)$ denote the highest power of $x_i$ 
in $u$, if $x_i$ divides $u$.

\begin{definition} [\cite{hh}]{\rm
A presentation of an ideal $I$ as $I = \cap_{i=1}^m Q_i$ of ideals $Q_i$ 
is called \textit{irredundant} if none of the ideals $Q_i$ can be omitted 
in this presentation.
}
\end{definition}

\begin{proposition} [\cite{hh}]
Each monomial ideal has a unique minimal monomial set
of generators. More precisely, let $G$ denote the set of monomials in $I$ which are minimal with respect to divisibility. Then $G$ is the unique minimal set of monomial generators.
\end{proposition}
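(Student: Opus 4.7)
The plan is to establish three assertions that together yield the proposition: (a) the set $G$ generates $I$, (b) $G$ is minimal as a monomial generating set, and (c) any monomial generating set of $I$ contains $G$, which forces uniqueness.

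For (a), I would take an arbitrary monomial $u \in I$ and show $u \in \langle G \rangle$. If $u \notin G$, then by the definition of $G$ there exists $u_1 \in I$ with $u_1 \mid u$ and $u_1 \neq u$, so $\deg u_1 < \deg u$. Iterating produces a strictly decreasing sequence of total degrees of monomials in $I$; since degrees lie in $\mathbb{Z}_{\geq 0}$, the process terminates, and the terminal element lies in $G$ and divides $u$. Since $I$, being a monomial ideal, is spanned as a $K$-vector space by its monomials, this gives $I = \langle G \rangle$.

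For (b), suppose some $g \in G$ can be dropped, i.e.\ $g \in \langle G \setminus \{g\} \rangle$. Here I would invoke the standard fact that a monomial lies in a monomial ideal if and only if it is divisible by one of the monomial generators; consequently there exists $g' \in G \setminus \{g\}$ with $g' \mid g$. But then $g' \in I$ is a proper divisor of $g$, contradicting $g \in G$. For (c), let $H$ be any monomial generating set of $I$. Given $g \in G$, apply the same divisibility fact to $g \in I = \langle H \rangle$ to get some $h \in H$ with $h \mid g$; since $h \in I$ and $g$ is minimal under divisibility in $I$, this forces $h = g$, so $g \in H$. Thus $G \subseteq H$ for every monomial generating set, and combined with (a) and (b) the unique minimal one must be exactly $G$.

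The only genuine subtlety is the recurring lemma used in steps (b) and (c): a monomial $u$ belongs to a monomial ideal $J = \langle f_1, \dots, f_r \rangle$ (where the $f_i$ are monomials) iff some $f_i$ divides $u$. This relies on expanding a putative expression $u = \sum h_i f_i$ into monomials and matching the monomial $u$ against the support on the right-hand side, and is where the $K$-basis structure of monomial ideals really enters. Everything else reduces to the well-foundedness of divisibility on monomials, which is a direct consequence of the degree being a non-negative integer.
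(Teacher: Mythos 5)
Your proposal is correct. Note that the paper itself gives no proof of this proposition: it is quoted verbatim as a preliminary from Herzog--Hibi \cite{hh}, so there is nothing internal to compare against. Your argument (well-founded descent on total degree to show that the divisibility-minimal monomials generate, the divisibility criterion for membership of a monomial in a monomial ideal to rule out dropping an element of $G$, and the same criterion to show $G$ is contained in every monomial generating set, forcing uniqueness) is essentially the standard textbook proof; the only cosmetic caveat is that your divisibility lemma should be phrased for an arbitrary, possibly infinite, monomial generating set, which is harmless since any element of the ideal is a finite combination of generators, and note that finiteness of $G$ (Dickson's lemma) is neither claimed in the statement nor needed in your argument.
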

We denote the unique minimal set of monomial generators of the monomial ideal $I$ by $G(I)$.

\begin{theorem} [\cite{hh}] \label{1}
Let $I \subset S = K[x_1,\dots,x_n]$ be a monomial ideal. Then $I = \cap_{i=1}^m Q_i$, where each $Q_i$ is generated by pure powers of the variables. In other words, each $Q_i$ is of the form $\langle x^{a_1}_{i_1},\dots,x^{a_k}_{i_k} \rangle$. Moreover, an irredundant presentation of this form is unique. 
\end{theorem}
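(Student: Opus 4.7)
The plan is to prove existence via an iterative splitting of generators, and uniqueness by extracting from any irredundant decomposition a combinatorial invariant depending only on $I$.

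For existence, the workhorse is the splitting identity: for any monomial ideal $J\subset S$ and coprime monomials $v,w$,
\[
J+\langle vw\rangle \;=\; (J+\langle v\rangle)\cap(J+\langle w\rangle).
\]
The inclusion $\subseteq$ is immediate. For $\supseteq$, since the intersection of monomial ideals is monomial, it suffices to test an arbitrary monomial $m$: if $m\notin J$ but $m\in(J+\langle v\rangle)\cap(J+\langle w\rangle)$, then $v\mid m$ and $w\mid m$, hence $vw\mid m$ by coprimality. Starting from $I=\langle G(I)\rangle$ (Proposition~2.2), whenever some minimal generator $u=x_{i_1}^{a_1}\cdots x_{i_k}^{a_k}$ has $k\geq 2$, I factor $u=v\cdot w$ with $v=x_{i_1}^{a_1}$ and $w=u/v$, and apply the identity to $J=\langle G(I)\setminus\{u\}\rangle$, expressing $I$ as an intersection of two strictly simpler monomial ideals. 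Iterating on the complexity measure $\sum_{u\in G(I)}(|\mathrm{floor}(u)|-1)$ terminates in finitely many steps with each factor generated by pure powers; discarding redundant factors gives an irredundant presentation of the desired form.

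For uniqueness, suppose $I=\bigcap_{i=1}^{m}Q_i=\bigcap_{j=1}^{r}Q'_j$ are two irredundant irreducible decompositions. To each $Q_i=\langle x_{l_1}^{a_1},\dots,x_{l_k}^{a_k}\rangle$ I would attach its \emph{characteristic monomial} $m_{Q_i}=x_{l_1}^{a_1-1}\cdots x_{l_k}^{a_k-1}$. By irredundancy, there exists $f\in\bigl(\bigcap_{j\neq i}Q_j\bigr)\setminus Q_i$; since $f\notin Q_i$, every $x_{l_r}$-exponent of $f$ is at most $a_r-1$, so I can choose a monomial $M$ supported outside $\{l_1,\dots,l_k\}$ with $h_i:=m_{Q_i}\cdot M$ a multiple of $f$, whence $h_i\in\bigl(\bigcap_{j\neq i}Q_j\bigr)\setminus Q_i$. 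A direct computation then yields $(I:h_i)=(Q_i:h_i)=\sqrt{Q_i}$, pinning down the radical of $Q_i$ intrinsically. A refined analysis of the colons of $I$ by various monomial divisors of $h_i$ recovers each exponent $a_r$ individually, reconstructing $Q_i$ from $I$ alone; applying the same recipe to $\bigcap_{j}Q'_j$ produces the same ideals, forcing the collections $\{Q_i\}$ and $\{Q'_j\}$ to coincide.

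The main obstacle is the uniqueness step. Primary (and hence irreducible) decomposition is in general non-unique at embedded primes, so uniqueness here is a genuine feature of the monomial setting rather than a formal consequence. The delicate point is to make the reconstruction of each $Q_i$ truly intrinsic: one must verify that the witness $h_i$ and the refined colon analysis can be carried out referring only to $I$, even when several components have overlapping supports and their characteristic monomials interfere combinatorially.
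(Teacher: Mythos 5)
The paper itself does not prove this statement (it is quoted from Herzog--Hibi), so I am judging your argument against the standard proof. Your existence half is correct and is essentially that proof: the splitting identity $J+\langle vw\rangle=(J+\langle v\rangle)\cap(J+\langle w\rangle)$ for coprime monomials, verified on monomials, plus induction on $\sum_{u}(|\mathrm{floor}(u)|-1)$, is exactly the textbook argument and is sound (minor point: after a split you should measure the complexity of the explicit generating set $(G(I)\setminus\{u\})\cup\{v\}$ rather than of the new minimal generating set, but termination is unaffected).

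The uniqueness half has a genuine gap. Your witness construction is fine as far as it goes: one can take $f$ to be a monomial, pad it to $h_i=m_{Q_i}\cdot M$, and the computation $(I\colon h_i)=(Q_i\colon h_i)=\sqrt{Q_i}$ is correct. But this only recovers the radicals, i.e.\ $\mathrm{Ass}(I)$, which is already given by general primary decomposition theory and does not distinguish the components: an irredundant irreducible decomposition can contain several components with the same radical (e.g.\ $\langle x^2,xy,y^2\rangle=\langle x^2,y\rangle\cap\langle x,y^2\rangle$, both $\langle x,y\rangle$-primary), so recovering $\sqrt{Q_i}$ falls well short of recovering $Q_i$. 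The step you describe as ``a refined analysis of the colons of $I$ by various monomial divisors of $h_i$ recovers each exponent $a_r$'' is precisely the missing content, and as stated it is circular: $h_i$ was built from $Q_i$, so nothing yet shows that a witness coming from the \emph{other} decomposition forces the \emph{same} component. What you would actually need is the intrinsic converse (in this paper it appears as Proposition~3.4): if $(I\colon v)=P$ for a monomial $v$ with exponents $c_r$ on the variables of $P$, then $\langle x_{l_1}^{c_1+1},\dots,x_{l_k}^{c_k+1}\rangle$ occurs in \emph{any} irredundant irreducible decomposition of $I$; combined with your $h_i$ this gives a decomposition-independent description of the set of components and hence uniqueness. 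You neither state nor prove this. The standard and much shorter route avoids colons altogether: show that if an irreducible monomial ideal $Q$ contains an intersection $\bigcap_j P_j$ of monomial ideals, then $Q\supseteq P_j$ for some $j$ (choose monomials $u_j\in P_j\setminus Q$; their least common multiple lies in $Q$, so some pure-power generator $x_i^{a}$ of $Q$ divides it, hence divides some $u_j$ since the exponent of $x_i$ in the lcm is a maximum --- contradiction). Applying this to both decompositions and using irredundancy matches the components bijectively and forces equality. Until the reconstruction step is proved (or replaced by this lemma), your uniqueness argument is a plan, not a proof.
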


\begin{definition} [\cite{iid}]
Let $I$ be an ideal of $S$. A set of irreducible ideals 
$\{Q_i\}_{i=1,\dots,m}$ is called an 
\textit{irredundant irreducible decomposition} (in short IID) 
of $I$, if $I = \cap^m_{i=1} Q_i$ and $Q_i \nsupseteq \cap_{j \neq i} Q_j$, 
for all $i$.
\end{definition}

\begin{proposition} [\cite{hh}] \label{2}
The irreducible ideal $\langle x_{i_1}^{a_1},\dots,x_{i_k}^{a_k} \rangle $ is $\langle x_{i_1},\dots,x_{i_k}\rangle $-primary.
\end{proposition}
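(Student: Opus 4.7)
The plan is to verify the two defining conditions for $Q := \langle x_{i_1}^{a_1}, \dots, x_{i_k}^{a_k}\rangle$ to be $P$-primary, with $P := \langle x_{i_1}, \dots, x_{i_k}\rangle$: namely, that $\sqrt{Q} = P$ and that every zero-divisor on $S/Q$ lies in $P/Q$.

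First I would establish $\sqrt{Q} = P$. The inclusion $P \subseteq \sqrt{Q}$ is immediate since $x_{i_j}^{a_j} \in Q$ gives $x_{i_j} \in \sqrt{Q}$ for each $j$. For the reverse inclusion, $\sqrt{Q}$ is itself a monomial ideal, and any monomial $f \in \sqrt{Q}$ satisfies $f^m \in Q$ for some $m$, meaning some $x_{i_j}^{a_j}$ divides $f^m$; hence $x_{i_j}$ divides $f$ and $f \in P$. Equivalently, a pigeonhole argument shows $P^N \subseteq Q$ for $N = a_1 + \dots + a_k - k + 1$, since any monomial of total degree at least $N$ in $x_{i_1}, \dots, x_{i_k}$ must contain some $x_{i_j}^{a_j}$ as a factor.

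For the primariness, I would show multiplication by any $g \in S \setminus P$ is injective on $S/Q$. Put $R = K[x_j : j \notin \{i_1, \dots, i_k\}]$, so that $S \cong R[x_{i_1}, \dots, x_{i_k}]$ and $S/Q$ is a free $R$-module with basis $\mathcal{B} = \{x_{i_1}^{b_1} \cdots x_{i_k}^{b_k} : 0 \leq b_j < a_j\}$. Decompose $g = g_0 + g_1$, where $g_0 \in R$ collects the monomials of $g$ involving none of the $x_{i_j}$ and $g_1 \in P$ collects the rest; the hypothesis $g \notin P$ forces $g_0 \neq 0$. Ordering $\mathcal{B}$ by any linear refinement of total $x_{i_j}$-degree, multiplication by $g_0$ acts as the scalar $g_0 \cdot \mathrm{id}$, while multiplication by $g_1$ strictly increases total $x_{i_j}$-degree on each basis element (or produces zero modulo $Q$), so it is represented by a strictly upper-triangular matrix $N$ over $R$. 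The matrix of multiplication by $g$ is therefore $g_0 I + N$, with determinant $g_0^{\,|\mathcal{B}|} \neq 0$ in the integral domain $R$; hence multiplication by $g$ is an injection of the free $R$-module $S/Q$. Combined with $\sqrt{Q} = P$, this proves $Q$ is $P$-primary.

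The main obstacle I anticipate is the triangularity step—explicitly, pinning down a total order on $\mathcal{B}$ (for example, the lex refinement of total $x_{i_j}$-degree) and checking that multiplying any basis element $x_{i_1}^{b_1}\cdots x_{i_k}^{b_k}$ by a single $x_{i_\ell}$ either shifts it to a basis element of strictly greater total degree or yields zero in $S/Q$; this then extends to arbitrary monomial summands of $g_1$ by linearity, and everything downstream is a formal determinant computation.
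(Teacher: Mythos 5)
The paper does not prove this proposition at all: it is quoted from Herzog--Hibi \cite{hh} as a known fact, so there is no in-paper argument to compare against. Your proof is correct and self-contained. The radical computation is fine (and the reverse inclusion $\sqrt{Q}\subseteq P$ is even quicker from $Q\subseteq P$ with $P$ prime, so the pigeonhole bound $P^{N}\subseteq Q$ is optional). The main step also goes through: $S/Q$ is indeed free over $R=K[x_j: j\notin\{i_1,\dots,i_k\}]$ on the monomials $x_{i_1}^{b_1}\cdots x_{i_k}^{b_k}$ with $0\le b_j<a_j$; writing $g=g_0+g_1$ with $g_0\in R\setminus\{0\}$, each monomial of $g_1$ sends a basis element either to zero mod $Q$ or to ($R$-multiples of) basis elements of strictly larger total $x_{i}$-degree, so in a degree-refining order the matrix of $g$ is triangular with $g_0$ on the diagonal (whether it comes out upper or lower triangular is just a convention of how you index rows and columns, and possible collisions among image terms do not matter). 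The determinant $g_0^{|\mathcal{B}|}$ is a nonzerodivisor in the domain $R$, and the adjugate argument then gives injectivity, so every zerodivisor on $S/Q$ lies in $P=\sqrt{Q}$, which is exactly $P$-primariness. For comparison, the usual textbook route (as in \cite{hh}) is more elementary: one reduces the primary condition to monomials and observes that if $uv\in Q$ for monomials $u,v$ with no $x_{i_j}$ dividing $v$, then some $x_{i_j}^{a_j}$ must already divide $u$, so $u\in Q$; that version is shorter but needs the (standard) reduction-to-monomials lemma, whereas your free-module and determinant argument avoids it at the cost of a little linear algebra over $R$.
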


\begin{corollary} [\cite{hh}] \label{a}
Let \(I \subset S\) be a monomial ideal, and let \(P \in \text{Ass}(I)\). Then there exists a monomial \(v\) such that 
\(P= (I \colon v)\).
\end{corollary}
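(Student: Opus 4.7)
The plan is to deduce the statement directly from the unique irredundant irreducible decomposition guaranteed by Theorem \ref{1}. Write $I = \bigcap_{j=1}^{m} Q_j$ with each $Q_j = \langle x_{i_{j,1}}^{a_{j,1}}, \dots, x_{i_{j,k_j}}^{a_{j,k_j}} \rangle$. By Proposition \ref{2}, $Q_j$ is $P_j$-primary, where $P_j = \langle x_{i_{j,1}}, \dots, x_{i_{j,k_j}} \rangle$. Since $\mathrm{Ass}(I)$ is exactly the set of radicals appearing in any irredundant primary decomposition, every $P \in \mathrm{Ass}(I)$ equals some $P_{j_0}$, and I would fix such an index $j_0$ for the rest of the argument.

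Next I would use irredundancy to produce a raw candidate. Because $Q_{j_0} \not\supseteq \bigcap_{j\neq j_0} Q_j$ and both sides are monomial ideals, there exists a monomial $u \in \bigcap_{j\neq j_0} Q_j$ with $u \notin Q_{j_0}$. The condition $u \notin Q_{j_0}$ means that, for each $\ell \in \{1,\dots,k_{j_0}\}$, the exponent $b_\ell := \nu_{i_{j_0,\ell}}(u)$ satisfies $b_\ell < a_{j_0,\ell}$. I would then sharpen $u$ to
\[
v \;=\; u \cdot \prod_{\ell=1}^{k_{j_0}} x_{i_{j_0,\ell}}^{\,a_{j_0,\ell}-1-b_\ell},
\]
so that in $v$ the variable $x_{i_{j_0,\ell}}$ appears to the exact power $a_{j_0,\ell}-1$. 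Being a monomial multiple of $u$, the monomial $v$ still lies in $Q_j$ for every $j \neq j_0$.

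Finally I would verify $(I:v) = P_{j_0}$ component by component. For $j \neq j_0$, membership $v \in Q_j$ gives $(Q_j : v) = S$. For $j = j_0$, note first that $x_{i_{j_0,\ell}} \cdot v \in Q_{j_0}$ for every $\ell$ (the exponent of $x_{i_{j_0,\ell}}$ is pushed up to $a_{j_0,\ell}$), so $P_{j_0} \subseteq (Q_{j_0}:v)$. Conversely, any monomial $w$ with $wv \in Q_{j_0}$ makes $wv$ divisible by some $x_{i_{j_0,\ell}}^{a_{j_0,\ell}}$; since $v$ supplies only $x_{i_{j_0,\ell}}^{a_{j_0,\ell}-1}$, the variable $x_{i_{j_0,\ell}}$ must divide $w$, forcing $w \in P_{j_0}$. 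Intersecting, $(I:v) = \bigcap_{j}(Q_j:v) = P_{j_0} = P$.

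The only mildly delicate step is the extraction of a monomial witness $u$ from the ideal non-containment $\bigcap_{j\neq j_0} Q_j \not\subseteq Q_{j_0}$; this is legitimate precisely because both sides are monomial ideals, and hence an inclusion between them can be tested on monomials. Everything else reduces to an exponent-by-exponent divisibility check, so I do not expect any genuine obstacle beyond setting up the right correction factor in the definition of $v$.
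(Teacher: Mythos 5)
Your proof is correct, but it constructs $v$ by a genuinely different route from the paper's. The paper quotes this corollary from Herzog--Hibi and in effect reproves it constructively in Theorem \ref{3}: there, $v$ is built from a $P$-primary component $Q=\langle x_{i_1}^{a_1},\dots,x_{i_k}^{a_k}\rangle$ of the irredundant irreducible decomposition by taking exponent $a_\ell-1$ on each $P$-variable and a \emph{uniformly large} exponent $b_j\geqslant\max\{\nu_j(u)\mid u\in G(I)\}$ on every remaining variable; irredundancy is then used inside the verification, in the case analysis showing $pv\in Q_i$ for those components $Q_i\neq Q$ generated only by powers of $P$-variables. You instead invoke irredundancy up front to extract a monomial witness $u\in\bigcap_{j\neq j_0}Q_j\setminus Q_{j_0}$ and top up its $P$-exponents to exactly $a_{j_0,\ell}-1$, after which the verification collapses: $(Q_j:v)=S$ for $j\neq j_0$ and $(Q_{j_0}:v)=P$ by a pure-power divisibility check. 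Your version is shorter and closer to the classical argument, at the price of a less explicit $v$ (the exponents on the non-$P$ variables come from an unspecified witness), whereas the paper's uniform choice is precisely what makes its Macaulay2 procedure possible. Three small points to tighten, none fatal: the first uniqueness theorem (Theorem \ref{b}) concerns minimal primary decompositions with distinct radicals, while the IID may contain several components with the same radical, so to see that every $P\in\mathrm{Ass}(I)$ is the radical of some $Q_{j_0}$ you should first group components with equal radical (their intersection is again primary) and pass to a minimal decomposition; when the IID has a single component the intersection over $j\neq j_0$ is empty, but your argument survives with $u=1$; and the step $(Q_{j_0}:v)\subseteq P$ tested only on monomials implicitly uses that a colon of a monomial ideal by a monomial is again a monomial ideal (equivalently, the support argument the paper spells out).
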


\begin{corollary} [\cite{hh}] \label{f}
If $I$ is a squarefree monomial ideal, then, $$I = \cap_{P \in Min(I)} P,$$ where $Min(I)$ is the set of minimal prime ideals of $I$.
\end{corollary}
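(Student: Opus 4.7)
My plan is to invoke the unique irredundant irreducible decomposition (IID) of Theorem \ref{1} and show that when $I$ is squarefree every component of its IID is already a monomial prime; irredundancy then identifies these primes with the minimal primes of $I$.

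First I would write $I=\bigcap_{i=1}^m Q_i$ with each $Q_i=\langle x_{i_1}^{a_{i,1}},\dots,x_{i_{k_i}}^{a_{i,k_i}}\rangle$ by Theorem \ref{1}, and introduce the auxiliary ideals $P_i':=\langle x_{i_j}\colon a_{i,j}=1\rangle\subseteq Q_i$ (when $a_{i,j}=1$, the variable $x_{i_j}$ is itself a generator of $Q_i$). The squarefree hypothesis enters through the following observation: for each $u\in G(I)$ and each $i$, from $u\in Q_i$ some pure power $x_{i_j}^{a_{i,j}}$ divides $u$, and because $u$ is squarefree this forces $a_{i,j}=1$, so $u\in P_i'$. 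Hence $I\subseteq\bigcap_i P_i'$, while $\bigcap_i P_i'\subseteq\bigcap_i Q_i=I$ is automatic, so $I=\bigcap_{i=1}^m P_i'$ gives a second presentation of $I$ by irreducible monomial ideals.

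After removing any redundancies, this is itself an IID of $I$, and the uniqueness in Theorem \ref{1} forces it to coincide as a set of ideals with $\{Q_1,\dots,Q_m\}$. Comparing cardinalities shows that no $P_i'$ can be redundant and $\{P_i'\}_{i=1}^m=\{Q_j\}_{j=1}^m$ as sets; so $Q_j=P_{\sigma(j)}'\subseteq Q_{\sigma(j)}$ for some permutation $\sigma$, and the antichain property of IIDs (a strict containment among components would make the larger one redundant) forces $\sigma=\mathrm{id}$ and $Q_j=P_j'$. This identity forces every exponent $a_{j,\ell}$ to equal $1$, so each $Q_j$ is the prime ideal $P_j=\langle x_{i_1},\dots,x_{i_{k_j}}\rangle$ of Proposition \ref{2}. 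The $P_j$ are then pairwise incomparable primes containing $I$, so they are precisely the minimal primes of $I$ (any minimal prime contains some $P_j$ and equals it by minimality), yielding $I=\bigcap_{P\in\mathrm{Min}(I)}P$.

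The main subtlety I expect is the matching step: one must verify that the auxiliary decomposition becomes an IID after redundancy removal and then deploy uniqueness together with the antichain property to identify each $Q_j$ with its corresponding $P_j'$, thereby squeezing every exponent down to $1$. Once this matching is in place, the remaining passage to $\mathrm{Min}(I)$ is just the standard observation that an irredundant intersection of monomial primes consists of incomparable primes, which must therefore be the minimal ones.
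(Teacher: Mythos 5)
Your argument is correct, but note that the paper gives no proof of Corollary \ref{f} at all: it is quoted from \cite{hh}, so there is no internal proof to compare against. What you have written is a sound, self-contained reconstruction from the tools the paper does quote: starting from the unique irredundant decomposition $I=\bigcap_{i=1}^m Q_i$ of Theorem \ref{1}, squarefreeness of the minimal generators correctly forces $I\subseteq P_i'\subseteq Q_i$ for the prime $P_i'$ spanned by the exponent-one variables of $Q_i$, and uniqueness of the irredundant presentation together with the antichain property of its components then yields $Q_i=P_i'$, i.e.\ every component is a monomial prime as in Proposition \ref{2}. Two small remarks: (i) you do not actually need to force $\sigma=\mathrm{id}$ --- once $\{P_i'\}=\{Q_j\}$ as sets, each $Q_j$ already equals one of the primes $P_i'$, which is all that is used; (ii) the final passage to $\mathrm{Min}(I)$ can be shortened by noting that each prime $P_j\supseteq I$ contains some minimal prime of $I$, so $\bigcap_{P\in\mathrm{Min}(I)}P\subseteq\bigcap_j P_j=I\subseteq\bigcap_{P\in\mathrm{Min}(I)}P$, after which incomparability identifies $\{P_j\}$ with $\mathrm{Min}(I)$ exactly. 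Your route is essentially the standard Herzog--Hibi argument; a common alternative is to show that a squarefree monomial ideal is radical and then invoke $\sqrt{I}=\bigcap_{P\in\mathrm{Min}(I)}P$, but deriving everything from Theorem \ref{1} as you do has the advantage of staying entirely within the decomposition machinery the paper already uses.
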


\begin{theorem} [\cite{am}] \label{b}
Let $A$ be a ring. Let $\mathfrak{a} \subset A$ be a decomposable ideal and let $\mathfrak{a} = \cap_{i = 1}^n \mathfrak{q}_i$ be a minimal primary decomposition of $\mathfrak{a}$. Let $\mathfrak{p}_i = r(\mathfrak{q}_i) \; (1 \leqslant i \leqslant n)$. Then the $\mathfrak{p}_i$ are precisely the prime ideals which occur in the set of ideals $r(\mathfrak{a} \colon x) \; (x\in A)$, and hence are independent of the particular decomposition of $\mathfrak{a}$.
\end{theorem}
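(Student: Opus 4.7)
The plan is to compute $r(\mathfrak{a}:x)$ explicitly for an arbitrary $x\in A$ and then read off which primes arise. Two routine facts drive the computation: the colon operator and the radical both commute with finite intersections of ideals, so
\[
r(\mathfrak{a}:x)\;=\;\bigcap_{i=1}^{n} r(\mathfrak{q}_i:x);
\]
and for each primary component, one has $(\mathfrak{q}_i:x)=A$ if $x\in \mathfrak{q}_i$, whereas $r(\mathfrak{q}_i:x)=\mathfrak{p}_i$ if $x\notin\mathfrak{q}_i$. Combining the two yields the workhorse identity
\[
r(\mathfrak{a}:x)\;=\;\bigcap_{i:\,x\notin \mathfrak{q}_i}\mathfrak{p}_i.
\]

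From here I would argue the two inclusions of the claimed equality of sets. If $r(\mathfrak{a}:x)=\mathfrak{p}$ is a prime ideal, then with $J=\{i:x\notin\mathfrak{q}_i\}$ the inclusion $\bigcap_{i\in J}\mathfrak{p}_i\subseteq\mathfrak{p}$ plus primality forces $\mathfrak{p}_j\subseteq\mathfrak{p}$ for some $j\in J$; combined with the reverse inclusion $\mathfrak{p}\subseteq\mathfrak{p}_j$ coming from $\mathfrak{p}$ being itself the intersection, this gives $\mathfrak{p}=\mathfrak{p}_j$. Conversely, the minimality of the primary decomposition provides, for each fixed $i$, an element $x_i\in\bigcap_{j\neq i}\mathfrak{q}_j$ with $x_i\notin\mathfrak{q}_i$; substituting into the workhorse identity collapses the intersection to the single term $\mathfrak{p}_i$, exhibiting each $\mathfrak{p}_i$ in the target set. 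The independence from the particular decomposition then follows immediately, since the set of primes of the form $r(\mathfrak{a}:x)$ depends only on $\mathfrak{a}$.

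The technical step that carries most of the weight is the second of the ``routine facts'' above, namely $r(\mathfrak{q}_i:x)=\mathfrak{p}_i$ when $x\notin\mathfrak{q}_i$. This uses both halves of the definition of a primary ideal: any $a\in(\mathfrak{q}_i:x)$ with $x\notin\mathfrak{q}_i$ satisfies $ax\in\mathfrak{q}_i$ with $x\notin\mathfrak{q}_i$, forcing $a\in r(\mathfrak{q}_i)=\mathfrak{p}_i$, so $(\mathfrak{q}_i:x)\subseteq\mathfrak{p}_i$ and hence the same containment after taking radicals, while the reverse inclusion at the level of radicals is immediate from $\mathfrak{q}_i\subseteq(\mathfrak{q}_i:x)$. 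Everything else in the argument is bookkeeping, so I expect no serious obstacle beyond this standard primary-ideal lemma.
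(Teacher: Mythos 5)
Your proposal is correct: the identity $r(\mathfrak{a}\colon x)=\bigcap_{x\notin\mathfrak{q}_i}\mathfrak{p}_i$, the primality argument picking out a single $\mathfrak{p}_j$, and the use of minimality to choose $x_i\in\bigcap_{j\neq i}\mathfrak{q}_j\setminus\mathfrak{q}_i$ are all sound. The paper states this result without proof, citing Atiyah--MacDonald, and your argument is essentially the standard proof given there (Theorem 4.5 together with the lemma on $(\mathfrak{q}\colon x)$ for a primary ideal $\mathfrak{q}$), so there is nothing further to compare.
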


\section{The colon structure}
\begin{theorem} \label{3}
Let $I \subset S$ be a monomial ideal, with irredundant irreducible 
decomposition $I = \cap_{i=1}^r Q_i$ and 
$P = \langle x_{i_1}, \dots, x_{i_k} \rangle \in Ass(I)$. 
Let $Q = \langle x_{i_1}^{a_1},\dots,x_{i_k}^{a_k} \rangle$ 
be a $P$-primary component of $I$ in the irredundant 
irreducible decomposition of $I$. Let $\{s_{1}, \ldots, s_{n-k}\}= [n] \backslash \{i_1, \dots , i_k\}$. For any choice of 
$b_j \geqslant \max\{\nu_j(u) \mid u \in G(I)\}$, \, $j \in [n-k]$, the monomial 
\begin{equation}\label{4}
v= x_{i_1}^{a_1 - 1} x_{i_2}^{a_2 - 1} \dots x_{i_k}^{a_k - 1} x_{s_1}^{b_1} \dots x_{s_{n-k}}^{b_{n-k}}
\end{equation}
always satisfies $P = (I \colon v)$.
\end{theorem}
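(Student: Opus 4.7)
\medskip

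\noindent\textbf{Proof plan.} The plan is to verify the two containments $(I : v) \subseteq P$ and $P \subseteq (I : v)$ separately, reducing in each case to checking monomials (since both $I$ and $v$ are monomial, $(I:v)$ is a monomial ideal). Throughout I will use the standard fact from \cite{hh} that in the irredundant irreducible decomposition of a monomial ideal, every exponent $c$ appearing in a generator $x_t^c$ of some $Q_i$ satisfies $c=\nu_t(u)$ for some $u\in G(I)$; in particular $c\leq \max\{\nu_t(u):u\in G(I)\}$, which is precisely the bound that legitimizes the choice of the $b_q$'s.

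For $(I:v)\subseteq P$, let $w$ be a monomial with $wv\in I$. Since $I\subseteq Q$, we have $wv\in Q=\langle x_{i_1}^{a_1},\dots,x_{i_k}^{a_k}\rangle$, so some $x_{i_j}^{a_j}$ divides $wv$. The exponent of $x_{i_j}$ in $v$ is only $a_j-1$, so $x_{i_j}$ must divide $w$, forcing $w\in P$.

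For $P\subseteq (I:v)$, I must show $x_{i_j}v\in Q_l$ for every $j\in[k]$ and every component $Q_l$ of the decomposition. I split into three cases. If $Q_l=Q$, then $x_{i_j}^{a_j}$ divides $x_{i_j}v$ and we are done. If $Q_l\neq Q$ and some generator $x_{t_\alpha}^{c_\alpha}$ of $Q_l$ involves a variable $x_{s_q}$ outside $P$, then the bound above gives $c_\alpha\leq b_q$, so $x_{s_q}^{c_\alpha}$ already divides $v$ and hence $x_{i_j}v\in Q_l$. The delicate case is $Q_l\neq Q$ with every generator of $Q_l$ a pure power $x_{i_{\beta(\alpha)}}^{c_\alpha}$, $\beta(\alpha)\in[k]$. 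Here I argue by contradiction: if $x_{i_j}v\notin Q_l$, then comparing exponents of $x_{i_j}v$ against each generator forces $c_\alpha\geq a_{\beta(\alpha)}$ for every $\alpha$ (whether or not $\beta(\alpha)=j$, noting that in the latter case the exponent $a_j$ in $x_{i_j}v$ yields $c_\alpha\geq a_j+1$). Consequently every generator of $Q_l$ lies in $Q$, so $Q_l\subseteq Q$; but then $Q$ is redundant in $\bigcap_i Q_i$, contradicting the hypothesis of irredundancy.

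The step I expect to be the main obstacle is precisely this last case: components $Q_l\neq Q$ supported entirely inside $\{i_1,\dots,i_k\}$ receive no slack from the large $b_q$'s, and eliminating them requires genuinely invoking irredundancy together with the exponent structure of $Q$. The other two cases reduce to careful bookkeeping of exponents against the generators involved.
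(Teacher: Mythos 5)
Your proof is correct and takes essentially the same route as the paper: both inclusions are verified componentwise, with $(I\colon v)\subseteq P$ obtained from $I\subseteq Q$ by comparing exponents against $a_j$, and $P\subseteq (I\colon v)$ by the case split on whether $Q_l=Q$, whether $Q_l$ has a generator in a variable $x_{s_q}$ outside $P$ (handled by $b_q\geq\max\{\nu_{s_q}(u)\mid u\in G(I)\}$), or whether $Q_l\neq Q$ is supported inside $\{x_{i_1},\dots,x_{i_k}\}$ (handled by irredundancy of the decomposition). The only differences are cosmetic: you phrase the last case contrapositively (all exponents $\geq a_{\beta}$ forces $Q_l\subseteq Q$, making $Q$ redundant) where the paper directly extracts a generator $x_{i_j}^{c_j}$ with $c_j<a_j$, and you state explicitly the fact that exponents of generators of the irreducible components arise as $\nu_t(u)$ for $u\in G(I)$, which the paper uses only implicitly in its Case (i).
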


\begin{proof}
We need to prove that $P = (I \colon v)$, where $v$ is as given in equation 
\eqref{4}. We prove that $P \subset (I \colon v)$ and $(I \colon v) \subset P$.
\medskip

Let $p \in P$. We need to prove that $p \in (I \colon v)$. We know that $(\cap I_m) \colon J = \cap (I_m \colon J)$, which gives us $(I \colon v) = \cap_{i = 1}^r (Q_i : v)$. Thus, we will prove that $p \in \cap_{i = 1}^r (Q_i \colon v)$ i.e., $p \in (Q_i \colon v)$,  for all $i \in [r]$ i.e., $pv \in Q_i$,  for all $i \in [r]$. Since, $p \in P$ implies $p = x_{i_1} h_1 + \dots + x_{i_k} h_k$, 
where $h_i \in S$, for all $1 \leqslant i \leqslant k$, then 
$$pv = (x_{i_1} h_1 + \dots + x_{i_k} h_k)v = \sum_{j=1}^k x_{i_j} h_j v.$$ 
We need to show that $pv \in Q_i$, for all $i \in [r]$.
To prove this we make cases based on the set of generators of $Q_i$'s.
\medskip

\noindent\textbf{Case (i).} Let the set of generators of $Q_i$ contain  
$x_{s_j}^{t_j}$, for some $t_j \in \mathbb{N}$ and 
$j \in [n]\backslash \{i_1, \dots , i_k \}$. 
Notice that $t_j \leqslant b_j$, as $b_j \geqslant$ max$\{\nu_j(u) \mid u \in G(I)\}$. Therefore, $x_{s_j}^{t_j} | v$, 
which implies $x_{s_j}^{t_j} | pv$, and hence $pv \in Q_i$.
\medskip

\noindent\textbf{Case (ii).} Let the set of generators of $Q_i$ does not contain 
any power of $x_{s_j}$. This implies that the 
set of generators of $Q_i$ contains pure powers of $\{x_{i_m}\}_{m \in F}$, 
where $F \subset [k]$. If $Q_i = Q$, then we can clearly see that 
$x_{i_j} h_j v \in Q$, as $x_{i_j}^{a_j} | x_{i_j} h_j v$, for all $j \in [k]$. 
Hence, $\sum_{j=1}^k x_{i_j} h_j v = pv \in Q$. If $Q_i \neq Q$, 
then we must notice that atleast one of $x_{i_m}$ must have a power 
strictly less than $a_m$, otherwise, if all the powers of $x_{i_m}$ 
are greater than or equal to $a_m$, then we get that 
$Q_i \cap Q = Q_i$, implying that we can omit $Q$ from the 
irredundant irreducible decomposition of $I$, which is a contradiction. 
Hence, the exists $j \in F$, such that $x_{i_j}^{c_j} \in G(Q_i)$, 
with $c_j < a_j$. Thus, we get $x_{i_j}^{c_j} | v$. Therefore, 
we get that $pv \in Q_i$.
\medskip

Therefore, we get that $pv \in Q_i,$ for all $i \in [r]$, which implies $pv \in \cap_{i=1}^r Q_i = I$. Hence, we get $p \in (I \colon v)$. Thus, 
$p \in P$ implies that $p \in (I \colon v)$, i.e., $P \subset (I \colon v)$.
\medskip

Let $f \in (I \colon v)$, we want to prove that $f \in P$. 
Since $f \in (I \colon v)$ implies $fv \in I$, we can 
always write $f = \sum_{u \in supp(f)} a_u u$, where 
$a_u \in K$. Let $u \in supp(f)$. We assume that no $x_{i_j}$ 
divides $u$. Since, $I$ is a monomial ideal and we have 
$fv \in I$, therefore $uv \in I$, i.e., 
$$u \cdot (x_{i_1}^{a_1 - 1} x_{i_2}^{a_2 - 1} \dots x_{i_k}^{a_k - 1} x_{s_1}^{b_1} \dots x_{s_{n-k}}^{b_{n-k}}) \in I.$$ 
Since, $I = \cap_{i=1}^r Q_i$, we get $I \subset Q_i$, for all $i \in [r]$. 
In particular, $I \subset Q$. Thus, we get that 
$u \cdot (x_{i_1}^{a_1 - 1} x_{i_2}^{a_2 - 1} \dots x_{i_k}^{a_k - 1} x_{s_1}^{b_1} \dots x_{s_{n-k}}^{b_{n-k}}) \in Q = \langle x_{i_1}^{a_1},\dots,x_{i_k}^{a_k} \rangle$, which is a contradiction, as power of $x_{i_j}$ in $uv$ is strictly 
less than $a_j$ for all $j \in [k]$. Hence, our assumption is wrong. 
Therefore, there exists $j \in [k]$, such that $x_{i_j} | u$. This 
implies that $u \in \langle x_{i_1}, \dots ,x_{i_k} \rangle = P$, which 
implies that $u \in P$, for all $u \in supp(f)$, which gives $f \in P$. 
Hence, $f \in (I \colon v)$ implies $f \in P$. 
Therefore, $(I \colon v) \subset P$.
\end{proof}

\begin{example}
Let $S = K[x_1,x_2,x_3]$. Let $I = \langle x_1x_2^3x_3^3 ,\; x_1^3x_2^3x_3 ,\; x_1^3x_2x_3^3\rangle$ or 
$I = \langle x_1^2x_2^4x_3^5 ,\; x_2^2x_1^4x_3^5 ,\; x_1^2x_3^4x_2^5 ,\; x_2^2x_3^4x_1^5 , \; x_3^2x_1^4x_2^5, \; x_3^2x_2^4x_1^5 \rangle$. 
Note that, the indices of all the generators have a particular form 
for both the examples. It is $\{(1,3,3), (3,3,1), (3,1,3)\}$ for the 
first example and $$\{(2,4,5), (4,2,5), (2,4,5), (5,2,4), (4,5,2), (5,4,2)\}$$ for 
the second example. 
We try to rewrite the exponents to have a better description of $v$.
\end{example}

Let $S= K[x_1,\dots, x_n]$ be the polynomial ring over the field $K$. 
For a fixed $k\in\{1,\ldots , n\}$ and 
$a_1 \leqslant a_2 \leqslant \cdots \leqslant a_k$ 
in $\mathbb{Z}_{\geq 0}$, we define the monomial ideal $I$ as follows:
$$I = \langle x_{i_1}^{a_1} x_{i_2}^{a_2} \cdots x_{i_k}^{a_k} \colon 1 \leqslant i_p \leqslant n ; \, i_p \neq i_q \text{ whenever } p \neq q ; 1 \leqslant p,\, q \leqslant k \rangle .$$
Suppose there are $r$ different $a_i$'s. Then we can write $a_i$'s as
\begin{eqnarray*}
a_1 = \cdots = a_{k_2 -1} & < & a_{k_2} = \cdots = a_{k_3 -1}\\
{} & < & a_{k_3} = \cdots = a_{k_4 -1}\\
{} & < & a_{k_4} = \cdots = a_{k_5 -1}\\
{} & < & \vdots\\
{} & < & a_{k_r} = \cdots = a_k.
\end{eqnarray*} 

\noindent For the sake of consistency, we denote $a_1 = a_{k_1}$. A 
primary decomposition (similar to the irredundant irreducible 
decomposition) of $I$ is 
$$I = \bigcap_{j = 1}^r \bigg( \bigcap_{\substack{i_p \in [n], \; i_p \neq i_q, \forall p \neq q, \\ p, q \in [n - k + k_j]}} \langle x_{i_1}^{a_{k_j}} , \dots, x_{i_{n - k + k_j}}^{a_{k_j}} \rangle \bigg),$$ 
with the set of associated prime ideals of $I$ given by
$$
Ass(I) = \{\langle x_{i_1}, \dots, x_{i_{n - k + k_j}} \rangle \bigr| \; i_p \in [n], i_p \neq i_q, \forall p \neq q; p , q \in [n - k + k_j], j \in [r] \}.
$$

\begin{corollary} \label{5}
Let $S$ be the polynomial ring and $I$ be the monomial ideal defined as above. 
Then, for any $P = \langle x_{i_1}, \dots, x_{i_{n - k + k_j}} \rangle \in Ass(I)$, where $j \in [r]$, we have $\langle x_{i_1}^{a_{k_j}} , \dots, x_{i_{n - k + k_j}}^{a_{k_j}} \rangle$ as a $P$-primary component in the primary decomposition 
of $I$ and we can write $P = (I \colon v)$, for 
$$v = x_{i_1}^{a_{k_j} -1} x_{i_2}^{a_{k_j} -1} \cdots x_{i_{n - k + k_j}}^{a_{k_j} -1} x_{s_1}^{b_1} \dots x_{s_{k-k_j}}^{b_{k-k_j}},$$ 
where $s_t \in [n] \backslash \{i_1, \dots , i_{n - k + k_j}\}$, 
all $s_t$'s are distinct and $b_t \geqslant a_{k_j + t} $, 
for all $t \in [k- k_j]$.
\end{corollary}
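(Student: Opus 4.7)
The plan is to apply Theorem~\ref{3} to this family, verifying along the way that the tighter bound $b_t \geq a_{k_j + t}$ in the statement is sufficient, even though a naive appeal to Theorem~\ref{3} would demand the potentially larger bound $b_t \geq \max\{\nu_{s_t}(u) \mid u \in G(I)\} = a_k$. Reading the primary decomposition given just above the corollary, the only irreducible component there with radical $P$ is
$$Q = \langle x_{i_1}^{a_{k_j}}, \ldots, x_{i_{n-k+k_j}}^{a_{k_j}}\rangle;$$
by Proposition~\ref{2} this is $P$-primary, and by the uniqueness in Theorem~\ref{1} it is the $P$-primary component of $I$ in its irredundant irreducible decomposition.

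The inclusion $(I \colon v) \subseteq P$ would require no new argument: the corresponding step in the proof of Theorem~\ref{3} uses only $I \subseteq Q$ together with the fact that each $x_{i_l}$ appears in $v$ to a power strictly less than $a_{k_j}$, and neither fact depends on the specific values of the $b_t$'s.

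The substantive task is $P \subseteq (I \colon v)$, i.e., showing $x_{i_m} v \in I$ for every $m \in [n - k + k_j]$. Rather than unpack Case~(i) of Theorem~\ref{3} with a sharper bound, I would exhibit an explicit generator of $I$ that divides $x_{i_m} v$. Pick any $k_j - 1$ indices $i_{l_1}, \ldots, i_{l_{k_j - 1}}$ from $\{i_1, \ldots, i_{n-k+k_j}\} \setminus \{i_m\}$ --- enough exist because $n - k + k_j - 1 \geq k_j - 1$ whenever $n \geq k$ --- and set
$$g = x_{i_{l_1}}^{a_1} x_{i_{l_2}}^{a_2} \cdots x_{i_{l_{k_j - 1}}}^{a_{k_j - 1}} \cdot x_{i_m}^{a_{k_j}} \cdot x_{s_1}^{a_{k_j + 1}} x_{s_2}^{a_{k_j + 2}} \cdots x_{s_{k - k_j}}^{a_k}.$$
This is a product of $k$ distinct variables whose exponent sequence is a permutation of $(a_1, \ldots, a_k)$, so $g \in G(I)$. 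A variable-by-variable comparison then shows $g \mid x_{i_m} v$: on each $x_{i_{l_p}}$ we have $a_p \leq a_{k_j - 1} \leq a_{k_j} - 1$, on $x_{i_m}$ the exponents match at $a_{k_j}$, and on each $x_{s_t}$ the hypothesis $a_{k_j + t} \leq b_t$ is precisely what is required.

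The hard part is really just the bookkeeping: the ``sub-critical'' exponents $a_1, \ldots, a_{k_j - 1}$ (strictly below the $P$-primary level $a_{k_j}$) must be absorbed by the $k_j - 1$ surplus indices inside $P$, while the ``super-critical'' exponents $a_{k_j + 1}, \ldots, a_k$ must be matched with $s_1, \ldots, s_{k - k_j}$ in the order dictated by the hypothesis $b_t \geq a_{k_j + t}$. Once the counting is aligned this way, the divisibility check is immediate and the two inclusions combine to give $P = (I \colon v)$.
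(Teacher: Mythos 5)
Your proposal is correct and follows essentially the same route as the paper: both arguments establish $P \subseteq (I\colon v)$ by exhibiting, for each $x_{i_m}$, a generator of $I$ whose exponents permute $(a_1,\dots,a_k)$ — the sub-critical exponents placed on surplus variables of $P$, the critical one on $x_{i_m}$, and the super-critical ones on the $x_{s_t}$ matched against $b_t \geq a_{k_j+t}$ — and both obtain $(I\colon v) \subseteq P$ from the $P$-primary component $Q$ exactly as in Theorem~\ref{3}. Your explicit remark that the hypothesis $b_t \geq a_{k_j+t}$ is weaker than the bound Theorem~\ref{3} would require, so a direct divisibility argument is needed, is a point the paper leaves implicit.
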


\begin{proof}
Let $P \in Ass(I)$, then $P = \langle x_{i_1}, x_{i_2}, \dots, x_{i_{n - k + k_j}} \rangle$ for some $j \in [r]$, and we have 
$Q = \langle x_{i_1}^{a_{k_j}} , \dots, x_{i_{n - k + k_j}}^{a_{k_j}} \rangle$ as a $P$-primary component.
We need to show that $P  = (I \colon v)$, where $v = x_{i_1}^{a_{k_j -1}} x_{i_2}^{a_{k_j -1}} \cdots x_{i_{n - k + k_j}}^{a_{k_j -1}} x_{s_1}^{b_1} \dots x_{s_{k-k_j}}^{b_{k-k_j}}$, such that 
$s_t \in [n] \backslash \{i_1, \dots , i_{n - k + k_j}\}$, all $s_t$'s are distinct and $b_t \geqslant a_{k_j + t} $ for 
$t \in [k- k_j]$.
We prove that $P \subset (I \colon v)$ and $(I \colon v) \subset P$.
\medskip

Let $p \in P$, then $p = \sum_{t = 1}^{n - k +k_j} x_{i_t} f_t$, where $f_t \in S$. We need to show $pv \in I$. We have
$$pv = \Bigr(\sum_{t = 1}^{n - k +k_j} x_{i_t} f_t \Bigr)v = \sum_{t = 1}^{n - k +k_j} (x_{i_t} f_t v)$$
It is enough to prove that $x_{i_t} f_t v \in I$, 
for all $t \in [n - k +k_j]$.
\medskip

Consider, 
$$x_{i_t} f_t v = x_{i_1}^{a_{k_j -1}} \cdots x_{i_{t-1}}^{a_{k_j -1}} x_{i_{t + 1}}^{a_{k_j -1}} \cdots x_{i_{n - k + k_j}}^{a_{k_j -1}} x_{i_{t}}^{a_{k_j}} x_{s_1}^{b_1} \cdots x_{s_{k-k_j}}^{b_{k-k_j}} f_t.$$
Now we choose ($k_j -1$) variables out of ($n - k + k_j - 1$) variables, which is possible since $n \geqslant k$. Thus, 
we can have a monomial $$u = x_{p_1}^{a_1} \cdots x_{p_{k_j -1}}^{a_{k_j -1}} x_{i_{t}}^{a_{k_j}} x_{s_1}^{a_{k_j +1}} \cdots x_{s_{k-k_j}}^{a_k} \in I,$$ where $p_m \in \{i_1, \dots, i_{t - 1}, i_{t+1}, \dots, i_{n - k +k_j} \}$ and $m \in [k_j -1]$. Clearly, $u | x_{i_t} f_t v$, which gives us that $x_{i_t} f_t v \in I$. Since 
$t \in [r]$ is arbitrary, we get that $x_{i_t} f_t v \in I$, for all $t \in [r]$. Thus we get that $pv \in I$, i.e., $P \subset (I \colon v)$.
\medskip

Let $f \in (I \colon v)$. We need to show that $f \in P$. We 
have $fv \in I$, therefore if we take $u \in supp(f)$ then 
$uv \in I$ as $I$ is an monomial ideal. Suppose, none of 
$x_{i_j}$ divides $u$, for all $j \in [n- k+ k_j]$. We have 
$uv \in I$ implies that $uv \in \langle x_{i_1}^{a_{k_j}} , \dots, x_{i_{n - k + k_j}}^{a_{k_j}} \rangle$, and therefore 
$$u x_{i_1}^{a_{k_j -1}} x_{i_2}^{a_{k_j -1}} \cdots x_{i_{n - k + k_j}}^{a_{k_j -1}} x_{s_1}^{b_1} \dots x_{s_{k-k_j}}^{b_{k-k_j}} \in \langle x_{i_1}^{a_{k_j}} , \dots, x_{i_{n - k + k_j}}^{a_{k_j}} \rangle,$$ 
which is contradiction. Thus, there must exist 
$j \in [n- k+ k_j]$, such that $x_{i_j} | u$. 
This gives that 
$u \in \langle x_{i_1}, x_{i_2}, \dots, x_{i_{n - k + k_j}} \rangle 
= P$. Since $u \in supp(f)$ is arbitrary, we get $u \in P$, 
for all $u \in supp(f)$. Hence, $f \in P$. Thus, we get 
$(I \colon v) \subset P$.
\end{proof}

\begin{proposition} \label{c}
Let $I \subset S$ be a monomial ideal and $P = \langle x_{i_1}, \dots, x_{i_k} \rangle \in Ass(I)$. If $v = x_{i_1}^{a_1} \cdots x_{i_k}^{a_k} x_{i_{k+1}}^{a_{k+1}} \cdots x_{i_n}^{a_n}$ be such that $P = (I \colon v)$, then, there exists $Q$ in the irredundant irreducible decomposition of $I$ such that $Q = \langle x_{i_1}^{a_1 + 1}, \dots, x_{i_k}^{a_k + 1} \rangle$.
\end{proposition}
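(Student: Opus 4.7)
The plan is to work with the unique IID $I = \cap_{\ell=1}^{r} Q_\ell$ of $I$ and pinpoint the desired $Q$ among the $Q_\ell$. Let $R = \{\ell \in [r] : v \notin Q_\ell\}$. Since $(I \colon v) = P$ is a proper ideal, $v \notin I$, so $R$ is nonempty. For $\ell \notin R$ we have $(Q_\ell \colon v) = S$, so the colon decomposition $(I \colon v) = \cap_{\ell}(Q_\ell \colon v)$ collapses to
$$P \;=\; \cap_{\ell \in R}(Q_\ell \colon v).$$

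Next I would compute $(Q_\ell \colon v)$ for $\ell \in R$ using the standard formula for the colon of an irreducible monomial ideal by a monomial. Writing $Q_\ell = \langle x_{j_1}^{c_1}, \ldots, x_{j_s}^{c_s} \rangle$, the condition $v \notin Q_\ell$ gives $c_t > \nu_{j_t}(v)$ for every $t$, so $(Q_\ell \colon v) = \langle x_{j_t}^{c_t - \nu_{j_t}(v)} : t \in [s]\rangle$. Since $P \subseteq (Q_\ell \colon v)$ and the latter is generated by pure powers of the $x_{j_t}$, the membership of each $x_{i_m}$ forces some $j_t = i_m$ with $c_t - \nu_{j_t}(v) = 1$; that is, $c_t = a_m + 1$. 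Hence each $Q_\ell$ with $\ell \in R$ has the form
$$Q_\ell \;=\; \langle x_{i_1}^{a_1+1}, \ldots, x_{i_k}^{a_k+1}, \; x_{i_{m'}}^{c_{\ell,m'}} : m' \in E_\ell \rangle$$
for some (possibly empty) set of extra indices $E_\ell \subseteq \{k+1,\ldots,n\}$, with each $c_{\ell,m'} > a_{m'}$.

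The remaining task is to exhibit some $\ell \in R$ with $E_\ell = \emptyset$; that $Q_\ell$ will then be exactly the stated $Q$. The main obstacle is that a coordinate-by-coordinate argument is not enough, because different $\ell$'s could carry extras in different coordinates, so simply knowing that for each $m' > k$ some $Q_\ell$ has no $x_{i_{m'}}$-generator does not produce a single $Q_\ell$ free of all extras. I would circumvent this by exploiting the full equality $P = \cap_{\ell \in R}(Q_\ell \colon v)$ against a probe monomial $u = \prod_{m' > k} x_{i_{m'}}^{N}$ supported entirely on non-$P$ variables, with $N$ larger than every $c_{\ell,m'}$ that appears for any $\ell \in R$. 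If every $\ell \in R$ had $E_\ell \neq \emptyset$, then for each such $\ell$ one could pick $m' \in E_\ell$ and observe that $x_{i_{m'}}^{c_{\ell,m'}} \mid uv$, placing $u$ in every $(Q_\ell \colon v)$ and hence in $P$; but $u$ involves none of the variables $x_{i_1}, \ldots, x_{i_k}$, a contradiction. Therefore some $E_\ell$ is empty, and that $Q_\ell$ coincides with $\langle x_{i_1}^{a_1+1}, \ldots, x_{i_k}^{a_k+1}\rangle$, completing the argument.
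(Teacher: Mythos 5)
Your argument is correct, and it takes a genuinely different route from the paper's. The paper also starts from $(I\colon v)=\bigcap_\ell (Q_\ell\colon v)$, but then uses the fact that $P$, being generated by variables, is irreducible to conclude at once that $P=(Q\colon v)$ for a \emph{single} component $Q$; it identifies the support of $Q$ as $\{i_1,\dots,i_k\}$ by appealing to the invariance theorem for associated primes (Theorem 2.9), and then pins the exponents to $a_j+1$ by a short divisibility argument. You instead discard the components containing $v$, compute every remaining colon $(Q_\ell\colon v)$ explicitly via the pure-power formula, deduce from $P\subseteq (Q_\ell\colon v)$ that each such $Q_\ell$ contains exactly $x_{i_m}^{a_m+1}$ for all $m\in[k]$ together with possible extra pure-power generators in the non-$P$ variables, and then kill the scenario in which every such component carries extras by intersecting against a probe monomial of large degree supported on the non-$P$ variables. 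The paper's proof is shorter because irreducibility of $P$ isolates one component immediately; yours is more elementary and self-contained (no appeal to irreducibility of $P$ or to the Atiyah--Macdonald uniqueness theorem) and proves slightly more, namely that \emph{every} component of the IID not containing $v$ has the exponents $a_m+1$ in the variables of $P$. A cosmetic remark: when $k=n$ your probe is the empty product, but then each $E_\ell$ is vacuously empty and the conclusion holds trivially, so no harm is done; you might add a sentence to that effect.
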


\begin{proof}
Let $I = \cap_{i =1}^r Q_i$ be the irredundant irreducible decomposition of $I$. 
We are given that 
$P = (I \colon v) = (( \cap_{i =1}^r Q_i ) \colon v) 
= \cap_{i =1}^r (Q_i \colon v)$. Since, $P$ is an irreducible 
ideal, there exists $Q$ in an irredundant irreducible decomposition of $I$, 
such that $P = Q \colon v$. Since, $Q$ is an irreducible ideal, we have 
$Q = \langle x_t^{b_t} | t \in F \subset [n] \rangle$. By theorem \ref{b} 
applied to $Q$, we get $F = \{i_1, \dots, i_k \}$. Thus, we get 
$Q = \langle x_{i_1}^{b_{i_1}}, \dots, x_{i_k}^{b_{i_k}} \rangle$. 
For simplicity we write $b_{i_j} = b_j$. Thus, we have 
$Q = \langle x_{i_1}^{b_1}, \dots, x_{i_k}^{b_k} \rangle$. 
Now we show that $b_j = a_j + 1$ holds for all $j \in [k]$. 
Assume that there exist $j \in [k]$ such that $b_j \neq a_j +1$. Then, 
$b_j < a_j + 1$ or $b_j > a_j +1$. If $b_j < a_j +1$, then 
$b_j \leqslant a_j$. This gives $x_{i_j}^{b_j} | x_{i_j}^{a_j}$, 
which implies 
$x_{i_j}^{b_j} | v$. Thus $v \in Q$, which implies that 
$Q \colon v = S$, which is not true. Hence, our assumption  
$b_j < a_j + 1$ is wrong. Therefore, we must have $b_j \nless a_j + 1$. 
Now, suppose that $b_j > a_j + 1$. Since $P = (Q \colon v)$, 
we have $x_{i_j}v \in Q$, therefore $x_{i_t}^{b_t} | x_{i_j}v$. If $t \neq j$, then $x_{i_t}^{b_t} | x_{i_j}v$ implies $x_{i_t}^{b_t} | v$. Again, this gives 
$v \in Q$, which implies $(Q \colon v) = S$, which is not true. 
Hence $t = j$. Thus, we get $x_{i_j}^{b_j} | x_{i_j}v$, that is 
$x_{i_j}^{b_j} | x_{i_1}^{a_1} \cdots x_{i_j}^{a_j +1} \cdots x_{i_k}^{a_k}$. 
This implies $b_j \leqslant a_j +1$, which contradicts the assumption that $b_j > a_j + 1$. Hence, $b_j \ngtr a_j + 1$. 
Therefore, $\nexists j \in [k]$, such that 
$b_j \neq a_j +1$. Therefore, $b_j = a_j + 1$ holds for all $j \in [k]$.
\end{proof}

We can use \ref{c} to prove the following result for squarefree monomial ideals.

\begin{proposition} \label{d}
Let $I$ be a squarefree monomial ideal in $S$ and let $P = \langle x_{i_1}, \dots, x_{i_k} \rangle$ belong to $Ass(I)$. Let $P  = (I \colon v)$, for some monomial $v$. 
Then, the powers of $x_{i_j}$'s in $v$ are zero, i.e., $v$ is of the form 
$v= x_{s_1}^{b_1} \cdots x_{s_{n-k}}^{b_{n-k}}$, where $s_j \in [n] \backslash \{i_1, \dots, i_k\}$ and $s_p \neq s_q$ whenever $p \neq q$.
\end{proposition}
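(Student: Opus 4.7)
The plan is to derive Proposition \ref{d} as an almost immediate corollary of Proposition \ref{c}, once the squarefree hypothesis has been used to pin down the shape of the irredundant irreducible decomposition of $I$.

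First, I would write the given monomial $v$ in its ``full'' form $v=x_{i_1}^{a_1}\cdots x_{i_k}^{a_k}x_{i_{k+1}}^{a_{k+1}}\cdots x_{i_n}^{a_n}$ where $\{i_{k+1},\dots,i_n\}=[n]\setminus\{i_1,\dots,i_k\}$ and the exponents $a_j\ge 0$ are uniquely determined by $v$ (some may be zero). My goal is to show $a_1=\cdots=a_k=0$.

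Next, I would invoke Proposition \ref{c}: the hypothesis $P=(I\colon v)$ produces a component $Q=\langle x_{i_1}^{a_1+1},\dots,x_{i_k}^{a_k+1}\rangle$ in the irredundant irreducible decomposition of $I$. Now I bring in the squarefree hypothesis. By Corollary \ref{f}, $I=\bigcap_{P'\in\mathrm{Min}(I)}P'$, and since each minimal prime $P'$ is itself an irreducible monomial ideal generated by variables (i.e., pure first powers), this is already an irredundant irreducible decomposition whose uniqueness is guaranteed by Theorem \ref{1}. Therefore every component in the irredundant irreducible decomposition of $I$, and in particular the $Q$ produced above, must be generated by variables with exponent exactly $1$.

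Comparing the two descriptions of $Q$ forces $a_j+1=1$, i.e., $a_j=0$, for every $j\in[k]$. Hence $v$ involves no $x_{i_j}$ for $j\in[k]$ and has the desired form $v=x_{s_1}^{b_1}\cdots x_{s_{n-k}}^{b_{n-k}}$ with $\{s_1,\dots,s_{n-k}\}=[n]\setminus\{i_1,\dots,i_k\}$ and the $s_t$ distinct. There is no real obstacle here; the only subtlety is making sure that the uniqueness statement in Theorem \ref{1} is being applied to the decomposition coming from Corollary \ref{f}, so that ``all exponents equal to $1$'' genuinely propagates to the component $Q$ supplied by Proposition \ref{c}.
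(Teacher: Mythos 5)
Your proposal is correct and follows essentially the same route as the paper: apply Proposition \ref{c} to produce the component $Q=\langle x_{i_1}^{a_1+1},\dots,x_{i_k}^{a_k+1}\rangle$ and then use squarefreeness to force all these exponents to equal $1$, hence $a_j=0$. In fact you make explicit (via Corollary \ref{f}, irredundancy of the minimal-prime intersection, and uniqueness in Theorem \ref{1}) the step the paper only asserts, namely that every component of the irredundant irreducible decomposition of a squarefree monomial ideal is generated by first powers of variables.
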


\begin{proof}
Let 
$v = x_{i_1}^{c_1}\cdots x_{i_k}^{c_k} x_{s_1}^{b_1} \cdots x_{s_{n-k}}^{b_{n-k}}$, such that $P = I \colon v$. If $c_i = 0$, for all $i \in [k]$, then we are done. Suppose, there exist $j \in [k]$ such that $c_j \neq 0$. This implies $c_j \geqslant 1$. Then by Proposition \ref{c}, there exist $Q = \langle x_{i_1}^{c_1 +1} , \dots, x_{i_j}^{c_j +1}, \dots, x_{i_k}^{c_k +1} \rangle$ in an irredundant irreducible decomposition of $I$. Note that $c_j \geqslant 1$ implies that $c_j + 1 \geqslant 2$, which is not true as $Q$ is an ideal in an irredundant irreducible decomposition 
of squarefree monomial ideal $I$. Hence, our assumption is wrong. Therefore, 
$c_i = 0$ for all $i \in [k]$.
\end{proof}

\section{Ideals corresponding to graphs and Clutters}
We set the notations and definitions for clutters as given in \cite{vn}. Let 
$S = K[t_1, \dots ,t_s] = \oplus_{d=0}^{\infty} S_d$ be the standard 
graded polynomial ring over a field $K$. Let $\mathcal{C}$ be a clutter 
with vertex set $V(\mathcal{C}) = \{t_1, \dots,t_s\}$, that is, 
$\mathcal{C}$ is a family of subsets of $V(\mathcal{C})$, called edges, 
none of which is included in another. The set of edges of $\mathcal{C}$ 
is denoted by $E(\mathcal{C})$. The edge ideal of $\mathcal{C}$, denoted $I(\mathcal{C})$, is the ideal of $S$ generated by all squarefree monomials 
$t_e = \prod_{t_i \in e} t_i$, such that $e \in E(\mathcal{C})$. A subset 
$A$ of the vertex set $V(\mathcal{C})$ is called a 
stable set of vertices of $\mathcal{C}$ if $A$ does not contain any 
edge of $\mathcal{C}$. The neighbour set of $A$, 
denoted $N_{\mathcal{C}}(A)$, is the set of all vertices $t_i$ such that 
$\{t_i\} \cup A$ contains an edge of $\mathcal{C}$. The set $\mathcal{F_C}$ 
denotes the family of all maximal stable sets of $\mathcal{C}$ and 
$\mathcal{A_C}$ denotes the family of all stable sets $A$ of $\mathcal{C}$ 
whose neighbor set $N_{\mathcal{C}}(A)$ is a minimal vertex cover of 
$\mathcal{C}$.

\begin{lemma} [\cite{vn}] \label{e}
Let $I = I(\mathcal{C})$ be the edge ideal of a clutter $\mathcal{C}$. The following hold:
\begin{enumerate}[(i)]
\item If $A \in \mathcal{A_C}$ and $t_A = \prod_{t_i \in A} t_i$, then $(I \colon t_A) = (N_{\mathcal{C}}(A))$.
\item If $A$ is stable and $N_{\mathcal{C}}(A)$ is a vertex cover, then $N_{\mathcal{C}}(A)$ is a minimal vertex cover.
\item If $(I \colon f) = \mathfrak{p}$ for some $f \in S_d$ and some $\mathfrak{p} \in Ass(I)$, then there is $A \in \mathcal{A_C}$ with $|A| \leqslant d$ such that $\mathfrak{p} = (N_{\mathcal{C}}(A))$ and $(I \colon t_A) = (N_{\mathcal{C}}(A))$.
\item If $A \in \mathcal{F_C}$, then $N_{\mathcal{C}}(A) = V(\mathcal{C}) \backslash A$ and $(I \colon t_A) = (N_{\mathcal{C}}(A))$.
\end{enumerate}
\end{lemma}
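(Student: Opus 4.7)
The four parts interact, so I would prove them in the order (i), (ii), (iv), (iii), since (iii) is the main obstacle and the others will be used along the way. One preliminary observation I would record once and reuse is that whenever $A$ is stable, $N_{\mathcal{C}}(A) \cap A = \emptyset$: any $t_i \in A \cap N_{\mathcal{C}}(A)$ would yield an edge contained in $\{t_i\} \cup A = A$, contradicting stability.

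For (i), both inclusions are short. If $t_i \in N_{\mathcal{C}}(A)$ there is an edge $e \subseteq \{t_i\} \cup A$, so $t_e$ divides $t_i\,t_A$ and $t_i \in (I \colon t_A)$. Conversely, $(I \colon t_A)$ is a monomial ideal, so I would test a monomial generator $w$: $w\,t_A \in I$ produces $t_e \mid w\,t_A$ for some edge $e$, and since the hypothesis $A \in \mathcal{A_C}$ makes $N_{\mathcal{C}}(A)$ a vertex cover, $e$ meets $N_{\mathcal{C}}(A)$; by the preliminary observation every such vertex lies in $e \setminus A \subseteq \mathrm{supp}(w)$, whence $w \in (N_{\mathcal{C}}(A))$. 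Part (ii) is then a one-line contradiction: if some $t_i \in N_{\mathcal{C}}(A)$ could be deleted while preserving the cover property, a witnessing edge $e \subseteq \{t_i\} \cup A$ would still have to be covered by $N_{\mathcal{C}}(A) \setminus \{t_i\}$, forcing a vertex of $A$ into $N_{\mathcal{C}}(A)$, against the preliminary observation.

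Part (iv) follows quickly from the previous two: if $A$ is maximal stable, each $t_i \notin A$ makes $\{t_i\} \cup A$ a proper superset of $A$, hence not stable, so it contains an edge and $t_i \in N_{\mathcal{C}}(A)$; combined with $N_{\mathcal{C}}(A) \cap A = \emptyset$ this gives $N_{\mathcal{C}}(A) = V(\mathcal{C}) \setminus A$. This set is a vertex cover (by stability of $A$) and minimal by (ii), so $A \in \mathcal{A_C}$ and (i) delivers $(I \colon t_A) = (N_{\mathcal{C}}(A))$.

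Part (iii) is where the real work sits. The strategy is to extract from $f$ a single monomial $u$ with $(I \colon u) = \mathfrak{p}$ and $\deg(u) \leq d$. Writing $f = \sum a_u u$ and observing that for any monomial $g$ the product $gf = \sum a_u (gu)$ has no cancellation, I would first show $\mathfrak{p} = (I \colon f) = \bigcap_{u \in \mathrm{supp}(f)} (I \colon u)$. If no single $u$ gave equality, each would yield a monomial $v_u \in (I \colon u) \setminus \mathfrak{p}$; the least common multiple $v$ of the $v_u$ would then satisfy $vf \in I$, hence $v \in \mathfrak{p}$, contradicting primality together with each $v_u \notin \mathfrak{p}$. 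So some $u \in \mathrm{supp}(f)$ attains $(I \colon u) = \mathfrak{p}$, with $|\mathrm{supp}(u)| \leq \deg(u) \leq d$. I would then set $A = \mathrm{supp}(u) \setminus \{t_{i_1}, \dots, t_{i_k}\}$; stability of $A$ is free because $u \notin I$ (otherwise $(I\colon u) = S$) forces $\mathrm{supp}(u)$, and hence $A$, to contain no edge. The main obstacle I expect is pinning down $N_{\mathcal{C}}(A) = \mathfrak{p}$: the edges witnessing $t_{i_j} u \in I$ may involve $\mathfrak{p}$-variables sitting inside $\mathrm{supp}(u)$, so one cannot directly conclude $t_{i_j} \in N_{\mathcal{C}}(A)$. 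The cleanest way I foresee is to replace $u$ by $t_A$ and verify, using parts (i) and (ii), that $(I \colon t_A)$ remains equal to $\mathfrak{p}$; this also matches the final claim $(I \colon t_A) = (N_{\mathcal{C}}(A))$, which (i) provides once $A \in \mathcal{A_C}$ is confirmed via (ii).
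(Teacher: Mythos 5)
The paper itself offers no proof of this lemma---it is quoted verbatim from the reference \cite{vn}---so your proposal has to stand on its own. Your preliminary observation $A\cap N_{\mathcal{C}}(A)=\emptyset$, your arguments for (i), (ii), (iv), and the reduction in (iii) to a single monomial $u\in \mathrm{supp}(f)$ with $(I\colon u)=\mathfrak{p}$ and $\deg u\leqslant d$ (via no cancellation among the $gu$ and the prime-avoidance/lcm trick) are all correct.

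The genuine gap is the end of (iii). After setting $A=\mathrm{supp}(u)\setminus\{t_{i_1},\dots,t_{i_k}\}$ you stop exactly where the content of (iii) lies: you never prove $(N_{\mathcal{C}}(A))=\mathfrak{p}$, nor that $N_{\mathcal{C}}(A)$ is a vertex cover (so that $A\in\mathcal{A_C}$ and (i)--(ii) apply); the sentence ``replace $u$ by $t_A$ and verify that $(I\colon t_A)$ remains equal to $\mathfrak{p}$'' is a plan, not an argument. Moreover, the obstacle you fear is vacuous: since $I$ is squarefree, membership of a monomial in $I$ depends only on its support, so if some $t_{i_j}$ divided $u$ then $t_{i_j}u$ and $u$ would have the same support, and $u\notin I$ would force $t_{i_j}u\notin I$, contradicting $t_{i_j}\in(I\colon u)$. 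Hence $\mathrm{supp}(u)$ is disjoint from $\{t_{i_1},\dots,t_{i_k}\}$ and $A=\mathrm{supp}(u)$, with $|A|\leqslant\deg u=d$. Now $t_{i_j}u\in I$ yields an edge contained in $\{t_{i_j}\}\cup A$, so $t_{i_j}\in N_{\mathcal{C}}(A)$; conversely, $t_m\in N_{\mathcal{C}}(A)$ yields an edge $e\subseteq\{t_m\}\cup A$ with $t_e\mid t_m u$, so $t_m\in(I\colon u)=\mathfrak{p}$. Thus $(N_{\mathcal{C}}(A))=\mathfrak{p}$; since $I\subseteq\mathfrak{p}$, every edge meets $N_{\mathcal{C}}(A)$, so it is a vertex cover, minimal by your (ii), whence $A\in\mathcal{A_C}$ and (i) gives $(I\colon t_A)=(N_{\mathcal{C}}(A))$. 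With this paragraph supplied, your proof of (iii), and hence of the whole lemma, is complete.
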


\begin{proposition} [\cite{ma}] \label{g}
Let $C$ be a set of vertices of a clutter $\mathcal{C}$ and let $\mathfrak{p}$ be the face ideal of $R = K[x_1,\dots,x_n]$ generated by $C$. The following are equivalent:
\begin{enumerate}
\item $C$ is a minimal vertex cover of $\mathcal{C}$.
\item $\mathfrak{p}$ is a minimal prime of $I(\mathcal{C})$.
\item $V(\mathcal{C}) \backslash C$ is a maximal face of $\Delta_{\mathcal{C}}$, where $\Delta_{\mathcal{C}}$ is the simplicial complex whose faces are the independent vertex sets of $\mathcal{C}$.
\end{enumerate}
\end{proposition}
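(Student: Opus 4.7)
The plan is to establish the cycle by proving $(1)\Leftrightarrow(3)$ and $(1)\Leftrightarrow(2)$ separately. The first equivalence is a purely combinatorial translation via complementation, while the second rests on the squarefree primary decomposition recorded in Corollary \ref{f}.

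For $(1)\Leftrightarrow(3)$, I would simply unwind the definitions. A subset $C\subseteq V(\mathcal{C})$ is a vertex cover iff $e\cap C\neq\emptyset$ for every $e\in E(\mathcal{C})$, which is equivalent to saying that no edge of $\mathcal{C}$ is contained in $V(\mathcal{C})\setminus C$; that is, $V(\mathcal{C})\setminus C$ is a stable set, hence a face of $\Delta_{\mathcal{C}}$. Since complementation in $V(\mathcal{C})$ reverses inclusion, $C$ is minimal among vertex covers iff $V(\mathcal{C})\setminus C$ is maximal among faces of $\Delta_{\mathcal{C}}$.

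For $(1)\Leftrightarrow(2)$, I would first verify the basic correspondence: if $\mathfrak{p}=\langle t_i : t_i\in C\rangle$ is the face ideal on $C$, then $I(\mathcal{C})\subseteq \mathfrak{p}$ iff $C$ is a vertex cover. Indeed, since $\mathfrak{p}$ is a monomial prime, $t_e=\prod_{t_i\in e}t_i\in\mathfrak{p}$ iff some $t_i$ with $t_i\in e$ lies in $\mathfrak{p}$, i.e., iff $e\cap C\neq\emptyset$; requiring this for every generator $t_e$ of $I(\mathcal{C})$ is exactly the vertex cover condition. By Corollary \ref{f}, $I(\mathcal{C})=\bigcap_{P\in\mathrm{Min}(I(\mathcal{C}))}P$, and every minimal prime of a monomial ideal is a face ideal (generated by a subset of the variables), since the radical is squarefree. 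Combining these two observations, $\mathfrak{p}$ is a minimal prime of $I(\mathcal{C})$ iff $\mathfrak{p}$ is minimal among face ideals containing $I(\mathcal{C})$ iff $C$ is a minimal vertex cover of $\mathcal{C}$.

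The main bookkeeping obstacle is keeping the direction of the order-reversal straight: inclusion of vertex covers corresponds directly to inclusion of face ideals and inversely to inclusion of complements in $V(\mathcal{C})$, so minimality of $C$ translates to maximality of $V(\mathcal{C})\setminus C$ in $\Delta_{\mathcal{C}}$ and to minimality of $\mathfrak{p}$ in $\mathrm{Min}(I(\mathcal{C}))$. Once this is tracked, neither equivalence requires anything beyond the definitions and the squarefree decomposition already at hand.
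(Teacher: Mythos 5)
Your argument is correct, but note that the paper does not prove this statement at all: it is imported verbatim from Villarreal's \emph{Monomial Algebras} \cite{ma} and used as a black box in the proof of Proposition \ref{10}. So what you have written is a self-contained replacement for a citation rather than an alternative to an in-paper proof. Your two equivalences are sound: $(1)\Leftrightarrow(3)$ is exactly the complementation argument ($C$ is a vertex cover iff $V(\mathcal{C})\setminus C$ contains no edge, i.e.\ is a face of $\Delta_{\mathcal{C}}$, and complementation reverses inclusion), and for $(1)\Leftrightarrow(2)$ the observation that $I(\mathcal{C})\subseteq\mathfrak{p}$ iff $C$ is a vertex cover, combined with the order-preserving correspondence $C\mapsto\mathfrak{p}_C$, does translate minimal vertex covers into face ideals that are minimal among face ideals containing $I(\mathcal{C})$. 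The one soft spot is the step identifying ``minimal among face ideals containing $I(\mathcal{C})$'' with ``minimal prime'': this needs the standard fact that every minimal prime of a monomial ideal is itself a monomial prime, hence a face ideal. Your parenthetical justification (``since the radical is squarefree'') together with Corollary \ref{f} does not literally yield this, since Corollary \ref{f} as stated does not say the minimal primes are generated by variables; you should either cite the standard result (e.g.\ from \cite{hh}) or note that for the squarefree ideal $I(\mathcal{C})$ the irredundant irreducible decomposition of Theorem \ref{1} consists of face ideals, whose radicals (themselves, here) exhaust the associated, hence the minimal, primes. With that reference supplied, your proof is complete, and it has the merit of making Section 4 independent of \cite{ma}.
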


\begin{proposition} \label{10}
Let $I = I(\mathcal{C})$ be the edge ideal of a clutter $\mathcal{C}$. Let  
$\mathfrak{p} \in Ass(I)$ corresponding to $A \in \mathcal{F_C}$, with 
$\mathfrak{p} = \langle N_{\mathcal{C}}(A)\rangle $. Then $t_A$ forms the base for $v$, 
such that $\mathfrak{p} = (I \colon v)$. 
\end{proposition}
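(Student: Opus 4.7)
The plan is to realise the statement as the squarefree specialisation of Theorem~\ref{3}, reading off the combinatorial structure through Lemma~\ref{e}(iv). First I would identify the $\mathfrak{p}$-primary component of $I$ in its irredundant irreducible decomposition: because $I = I(\mathcal{C})$ is squarefree, Corollary~\ref{f} shows that this decomposition is just $I = \cap_{P \in \mathrm{Min}(I)} P$, so the $\mathfrak{p}$-primary component $Q$ coincides with $\mathfrak{p}$ itself, with every pure-power exponent $a_j$ equal to $1$.

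Next I would plug this into the formula of Theorem~\ref{3}. Every $a_j - 1$ vanishes, so the first block $x_{i_1}^{a_1-1}\cdots x_{i_k}^{a_k-1}$ collapses to $1$, and $v$ reduces to $\prod_{j} x_{s_j}^{b_j}$, where $\{x_{s_1}, \dots, x_{s_{n-k}}\}$ is precisely the complement of the variable set of $\mathfrak{p} = (N_{\mathcal{C}}(A))$ in $V(\mathcal{C})$. Because $A \in \mathcal{F_C}$ is \emph{maximal} stable, Lemma~\ref{e}(iv) gives $N_{\mathcal{C}}(A) = V(\mathcal{C}) \setminus A$, so this complement is exactly $A$. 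Choosing any $b_j \geq 1$ (which is allowed because $\max\{\nu_j(u) : u \in G(I)\} \leq 1$ for a squarefree $I$) then forces every $t \in A$ to divide $v$ while no vertex of $N_{\mathcal{C}}(A)$ divides $v$, so $\mbox{base}(v) = \prod_{t \in A} t = t_A$, as required.

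I do not expect any serious obstacle. The only step with genuine content is the identification $V(\mathcal{C}) \setminus N_{\mathcal{C}}(A) = A$, which really uses maximality of $A$: for a merely stable set the complement of $N_{\mathcal{C}}(A)$ can strictly contain $A$ and the conclusion would fail, so the hypothesis $A \in \mathcal{F_C}$ is essential rather than cosmetic. As a sanity check, Lemma~\ref{e}(iv) already supplies an explicit legal instance, namely $v = t_A$ itself, for which $(I : t_A) = (N_{\mathcal{C}}(A)) = \mathfrak{p}$ by the lemma and $\mbox{base}(t_A) = t_A$ holds automatically because $t_A$ is squarefree.
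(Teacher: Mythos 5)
Your proof is correct, and it takes a mildly different route from the paper's. The paper's proof is a citation chain: squarefreeness and Corollary \ref{f} give that $\mathfrak{p}$ is a minimal prime, Proposition \ref{g} identifies it with a minimal vertex cover, Lemma \ref{e}(iv) supplies the explicit presentation $(I \colon t_A) = (N_{\mathcal{C}}(A)) = \mathfrak{p}$ together with $N_{\mathcal{C}}(A) = V(\mathcal{C})\setminus A$, and Proposition \ref{d} is invoked to constrain the support of \emph{any} $v$ with $(I\colon v)=\mathfrak{p}$ to lie outside $N_{\mathcal{C}}(A)$, i.e.\ inside $A$. You instead specialize Theorem \ref{3}: Corollary \ref{f} (together with uniqueness of the irredundant irreducible decomposition, so that $\mathrm{Ass}(I)=\mathrm{Min}(I)$ and $\mathfrak{p}$ really occurs as a component --- this deserves one explicit sentence, since you need $\mathfrak{p}\in \mathrm{Min}(I)$ before saying ``the $\mathfrak{p}$-primary component is $\mathfrak{p}$ itself'') gives all exponents equal to $1$, the formula of Theorem \ref{3} then yields $v=\prod_{t\in A}t^{b_t}$ for any $b_t\geq 1$, and Lemma \ref{e}(iv) enters only through the identification $V(\mathcal{C})\setminus N_{\mathcal{C}}(A)=A$, so $\mathrm{base}(v)=t_A$. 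The trade-off: your argument is constructive and exhibits the whole family of admissible $v$'s (with $v=t_A$ as the minimal instance, which Lemma \ref{e}(iv) confirms directly), whereas the paper's appeal to Proposition \ref{d} records the complementary fact that \emph{every} monomial presenting $\mathfrak{p}$ is supported inside $A$, so $t_A$ is the largest base that can occur; neither argument (correctly) claims that every such $v$ has base exactly $t_A$, and your existential reading of the statement is the only tenable one, since smaller stable sets with the same neighbourhood can also present $\mathfrak{p}$.
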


\begin{proof}
We know that edge ideal is a squarefree monomial ideal. Then, by Corollary \ref{f}, 
$\mathfrak{p} \in Ass(I)$ implies $\mathfrak{p}$ is minimal prime ideal of $I$. 
Thus, by Proposition \ref{g}, we get a corresponding minimal vertex cover of 
$\mathcal{C}$ for $\mathfrak{p}$. The proof follows by Lemma \ref{e} and 
Proposition \ref{d}.
\end{proof}

The simple finite graphs are the examples of clutters. Hence, proposition \ref{10} also gives us the base for the $v$ corresponding to $P \in Ass(I(\mathcal{G}))$, where $I(\mathcal{G})$ is a graph ideal (or edge ideal) corresponding to a simple finite graph $\mathcal{G}$.

\section{Ideals of Borel type}
\begin{definition} [\cite{hh}]{\rm
A monomial ideal $I \subset S = K[x_1,\dots,x_n]$ is of 
\textit{Borel type} if $I \colon x_i^{\infty} = I \colon \langle x_1,\dots,x_i \rangle^{\infty}$, for $i = 1,\dots, n$.
}
\end{definition}

\begin{theorem} [\cite{hh}] \label{7} 
Let $I \subset S$ be a monomial ideal. The following conditions are equivalent:
\begin{enumerate}
\item $I$ is of Borel type.

\item For each monomial $u\in I$ and all integers $i,j,s$, 
with $1 \leqslant j < i \leqslant n$ and $s > 0$, such that $x_i^s |u$, 
there exists an integer $t \geqslant 0$ such that $x_j^t(u/x_i^s) \in I$.

\item For each monomial $u \in I$ and all integers $i,j$ with 
$1 \leqslant j < i \leqslant n$, there exists an integer 
$t \geqslant 0$ such that $x_j^t(u/x_i^{\nu_i(u)}) \in I$.

\item If $P \in Ass(I)$, then $P = \langle x_1,\dots,x_j \rangle$ for some $j$.
\end{enumerate}
\end{theorem}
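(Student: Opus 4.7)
The plan is to establish the cycle $(1)\Rightarrow(2)\Rightarrow(3)\Rightarrow(4)\Rightarrow(1)$, since each arrow is natural and the two endpoints connect most cleanly through the primary decomposition supplied by Theorem~\ref{1}. For $(1)\Rightarrow(2)$, I would first observe that $(x_j^k)\subseteq\langle x_1,\dots,x_i\rangle^k$ whenever $j\leqslant i$, which gives $I\colon\langle x_1,\dots,x_i\rangle^\infty\subseteq I\colon x_j^\infty$. Combined with the Borel-type equality $I\colon x_i^\infty=I\colon\langle x_1,\dots,x_i\rangle^\infty$, this yields $I\colon x_i^\infty\subseteq I\colon x_j^\infty$. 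If $u\in I$ and $x_i^s\mid u$, then $u/x_i^s\in I\colon x_i^s\subseteq I\colon x_i^\infty\subseteq I\colon x_j^\infty$, and the definition of $I\colon x_j^\infty$ provides the required $t$. The step $(2)\Rightarrow(3)$ is immediate upon choosing $s=\nu_i(u)$.

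For $(3)\Rightarrow(4)$, let $P\in\mathrm{Ass}(I)$. Corollary~\ref{a} supplies a monomial $u$ with $P=(I\colon u)$, so $x_\ell\in P$ iff $x_\ell u\in I$. Write $P=\langle x_{i_1},\dots,x_{i_k}\rangle$ and set $m=\max\{i_1,\dots,i_k\}$; it then suffices to show $x_j\in P$ for every $j<m$. Fix such $j$ and apply $(3)$ to $u'=x_m u\in I$ with the pair $(m,j)$: there is $t\geqslant 0$ with $x_j^t\bigl(u'/x_m^{\nu_m(u')}\bigr)\in I$. Since $\nu_m(u')=\nu_m(u)+1$, one has $u'/x_m^{\nu_m(u')}=u/x_m^{\nu_m(u)}$, so multiplying by $x_m^{\nu_m(u)}$ produces $x_j^t u\in I$, i.e.\ $x_j^t\in P$. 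Primality of $P$ then forces $x_j\in P$.

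For $(4)\Rightarrow(1)$, I would use Theorem~\ref{1} to write $I=\cap_{j=1}^r Q_j$ with $Q_j=\langle x_1^{a_1^{(j)}},\dots,x_{m_j}^{a_{m_j}^{(j)}}\rangle$, whose radical $P_j=\langle x_1,\dots,x_{m_j}\rangle$ has the form forced by~$(4)$. Noetherian stability of the colon chain $I\colon J^k$ allows one to interchange $(\cap Q_j)\colon J^\infty$ with $\cap(Q_j\colon J^\infty)$, so it suffices to compare $Q_j\colon x_i^\infty$ with $Q_j\colon\langle x_1,\dots,x_i\rangle^\infty$ component-wise. If $i\leqslant m_j$, then $x_i^{a_i^{(j)}}\in Q_j$ and, for $k$ large, every monomial generator of $\langle x_1,\dots,x_i\rangle^k$ lies in $Q_j$ by pigeonhole, so both colons equal $S$. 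If $i>m_j$, then $x_i\notin P_j$ is a nonzerodivisor modulo $Q_j$, forcing $Q_j\colon x_i^\infty=Q_j$; and any $f\in Q_j\colon\langle x_1,\dots,x_i\rangle^k$ must satisfy $fx_{m_j+1}^k\in Q_j$, which again forces $f\in Q_j$. In both cases the two component colons agree, and the desired global equality follows.

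The step I expect to be the main obstacle is precisely $(4)\Rightarrow(1)$: although the classification of the $Q_j$ by the size of $m_j$ is conceptually clear, one must carefully justify the commutation of colons with infinite intersection via Noetherian stability, and then verify the component-wise equality of the two candidate colons, not just that they identify the same set of ``active'' indices. The remaining implications are essentially bookkeeping once the monomial witness $u$ from Corollary~\ref{a} is chosen.
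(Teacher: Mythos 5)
This statement is one the paper does not prove at all: it is quoted from Herzog--Hibi (reference \cite{hh}) as background, so there is no internal proof to compare against, and your proposal should be judged as a standalone argument. Judged that way, it is correct, and it is essentially the standard argument: the cycle $(1)\Rightarrow(2)\Rightarrow(3)\Rightarrow(4)\Rightarrow(1)$ goes through as you describe. The steps $(1)\Rightarrow(2)\Rightarrow(3)$ are fine (in $(2)\Rightarrow(3)$ note the trivial case $\nu_i(u)=0$, since $(2)$ only allows $s>0$); in $(3)\Rightarrow(4)$ your use of the monomial witness from Corollary~\ref{a} and the observation $\nu_m(x_mu)=\nu_m(u)+1$ is exactly right, though you should say explicitly why $P$ may be written as $\langle x_{i_1},\dots,x_{i_k}\rangle$ (this follows from Theorem~\ref{1}, Proposition~\ref{2} and Theorem~\ref{b}, and is used silently throughout the paper as well); and in $(4)\Rightarrow(1)$ both the Noetherian-stabilization argument for commuting the saturation with the finite intersection and the two component-wise cases ($i\leqslant m_j$ gives both colons equal to $S$ by pigeonhole, $i>m_j$ gives both equal to $Q_j$ via the nonzerodivisor $x_{m_j+1}$) are sound. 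The one point worth making explicit in $(4)\Rightarrow(1)$ is that the radical of \emph{every} component of the irredundant irreducible decomposition is an associated prime, so that hypothesis $(4)$ really constrains each $Q_j$: since distinct irreducible components may share a radical, one should group equal-radical components into a minimal primary decomposition before invoking Theorem~\ref{b}; irredundancy of the original decomposition guarantees the grouped one is minimal. With those small justifications added, your proof is complete and uses only results already quoted in the paper's preliminaries.
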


\begin{proposition}
Let $I \subset S$ be monomial ideal of Borel type with irredundant irreducible decomposition $I = \cap_{i=1}^r Q_i$. Then, for $P \in Ass(I)$, $P$ will be of the form $P = \langle x_1,\dots,x_k \rangle$. Let $Q = \langle x_1^{a_1},\dots,x_k^{a_k} \rangle$ be the $P$-primary component of $I$ in the irredundant irreducible decomposition of $I$. Then, for $k < n$, we can write $P = (I \colon v)$, where
\begin{equation}\label{8}
v= x_1^{a_1 - 1} x_2^{a_2 - 1} \dots x_k^{a_k - 1} x_{k+1}^{b_{k+1}}
\end{equation}
and $b_{k+1} \geqslant$ max$\{\nu_{k+1}(u) \mid u \in G(I)\}$.
\end{proposition}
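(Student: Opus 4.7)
The first claim---that $P=\langle x_1,\dots,x_k\rangle$---is immediate from Theorem~\ref{7}(4). For the equality $P=(I\colon v)$, the inclusion $(I\colon v)\subseteq P$ is essentially verbatim from the second half of the proof of Theorem~\ref{3}: if $f\in(I\colon v)$ and some $u\in\mathrm{supp}(f)$ were divisible by none of $x_1,\dots,x_k$, then $uv$ would carry $x_j$ to a power at most $a_j-1$ for every $j\in[k]$, forcing $uv\notin Q$ and contradicting $uv\in I\subseteq Q$; hence some $x_j$ divides $u$, so $u\in P$ and $f\in P$.

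For the reverse inclusion $P\subseteq(I\colon v)$ it suffices to show $x_jv\in Q_i$ for every $j\in[k]$ and every $i\in[r]$. The key structural input from Borel type is the following: each irreducible component $Q_i$ is primary, its radical $\sqrt{Q_i}$ is an associated prime of $I$, and by Theorem~\ref{7}(4) every such radical has the form $\langle x_1,\dots,x_{m_i}\rangle$ for some integer $m_i$. Consequently
\[
Q_i=\langle x_1^{c_1},\dots,x_{m_i}^{c_{m_i}}\rangle
\]
involves only the initial segment of variables $x_1,\dots,x_{m_i}$, and this is precisely what enables $v$ to use the single ``outer'' factor $x_{k+1}^{b_{k+1}}$ rather than the full string $x_{k+1}^{b_{k+1}}\cdots x_n^{b_n}$ that Theorem~\ref{3} requires in general.

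Granted this structure, I would verify $x_jv\in Q_i$ by splitting on the size of $m_i$. If $m_i\le k$, the argument of Case~(ii) of Theorem~\ref{3} carries over unchanged: either $Q_i=Q$ and then $x_j^{a_j}\mid x_jv\in Q$, or $Q_i\neq Q$, in which case irredundance forces some generator $x_l^{c_l}\in G(Q_i)$ with $l\in[m_i]$ to satisfy $c_l<a_l$ (otherwise $Q_i\subseteq Q$, making $Q$ redundant), and this generator divides $x_jv$. If $m_i\ge k+1$, then $x_{k+1}^{c_{k+1}}$ appears among the generators of $Q_i$, and the standard bound that exponents in irreducible components of a monomial ideal are dominated by the corresponding maxima over $G(I)$---the fact tacitly used in Case~(i) of Theorem~\ref{3}---gives $c_{k+1}\le\max\{\nu_{k+1}(u)\mid u\in G(I)\}\le b_{k+1}$, so $x_{k+1}^{c_{k+1}}$ divides $v$ and therefore $x_jv$.

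The main obstacle is the structural observation above: one must upgrade ``associated primes are initial-segment primes'' (Theorem~\ref{7}(4)) to the stronger statement that every component of the irredundant irreducible decomposition is itself generated by pure powers of an initial segment of the variables. Once that step is in place the case analysis is routine and almost entirely inherited from the proof of Theorem~\ref{3}, with the single factor $x_{k+1}^{b_{k+1}}$ absorbing the role played there by the longer product $x_{s_1}^{b_1}\cdots x_{s_{n-k}}^{b_{n-k}}$.
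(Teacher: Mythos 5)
Your proof is correct and follows essentially the same route as the paper: both inclusions are argued exactly as in Theorem~\ref{3}, and your case split on whether the radical of $Q_i$ reaches past $x_k$ is the same as the paper's split on whether $G(Q_i)$ contains a power of some $x_s$ with $s>k$, with the same use of Theorem~\ref{7}(4) to force $x_{k+1}^{c_{k+1}}\in G(Q_i)$ and the same bound $c_{k+1}\le b_{k+1}$. The structural fact you flag (components of the irredundant irreducible decomposition are pure-power ideals on initial segments, via their radicals being associated primes) is precisely what the paper invokes, tacitly, in its Case~(i).
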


\begin{proof}
We need to prove that $P = (I \colon v)$, where $v$ is as given in equation 
\eqref{8}. We now show that $P \subset (I \colon v)$ and $(I \colon v) \subset P$.
\medskip

Let $p \in P$. We need to prove that $p \in I \colon v$, i.e., $pv \in I$. Since, $p \in P$ implies 
$p = x_1 h_1 + \dots + x_k h_k$, where $h_i \in S, \forall 1 \leqslant i \leqslant k$, 
we get 
$$pv = (x_1 h_1 + \dots + x_k h_k)v = \sum_{j=1}^k x_j h_j v.$$
We need to show that $pv \in I$ i.e. $pv \in Q_i$, for all  $i \in [r]$.
To prove this we make cases based on the set of generators of $Q_i$'s.
\medskip

\noindent\textbf{Case (i).} Let us assume that the set of generators of 
$Q_i$ contains $x_s^{t_s}$, for some $t_s \in \mathbb{N}$ and $s > k$. 
As $Q_i$ contains $x_s^t$ as one of its generators with $s>k$, hence 
from Theorem \ref{7}, we get that $Q_i$ is of the form $\langle x_1^{t_1}, \dots, x_k^{t_k}, x_{k+1}^{t_{k+1}}, \dots, x_s^{t_s}, \dots \rangle$. 
Note that $t_{k+1} \leqslant b_{k+1}$, as $b_{k+1}\geqslant$ max$\{\nu_{k+1}(u) \mid u \in G(I)\}$. Therefore, $x_{k+1}^{t_{k+1}} | v$, 
which implies that $x_{k+1}^{t_{k+1}} | pv$. Hence, $pv \in Q_i$.
\medskip

\noindent\textbf{Case (ii).} Let us assume that the 
set of generators of $Q_i$ does not contain any power of $x_s$ with $s>k$. 
This implies that set of generator of $Q_i$ contains pure powers of 
$\{x_m\}_{m \in F}$, where $F=\{1,\dots,l\}$ for some $l \leqslant k$. 
If $Q_i = Q$, then we can clearly see that $x_j h_j v \in Q$, as 
$x_j^{a_j} | x_j h_j v$ for all $j \in [k]$. Hence, 
$\sum_{j=1}^k x_j h_j v = pv \in Q$. If $Q_i \neq Q$, 
then atleast one of $x_m$ must have 
power strictly less than $a_m$, otherwise if all the powers 
of $x_m$ are greater than or equal to $a_m$, then we 
get that $Q_i \cap Q = Q_i$. This implies that we can omit 
$Q$ from the irredundant irreducible decomposition of $I$, which is contradiction. Hence, there exists $j \in F$ such that $x_j^{c_j} \in G(Q_i)$, 
with $c_j < a_j$. We get $x_j^{c_j} | v$, which implies 
that $x_j^{c_j} | pv$. 
This gives us $pv \in Q_i$, and hence $pv \in \cap_{i=1}^r Q_i = I$. 
Hence, we get $p \in (I \colon v)$. 
\medskip

Now, let $f \in (I \colon v)$. We want to prove that $f \in P$. 
Since $f \in (I \colon v)$, this implies that 
$fv \in I$. We can always write $f = \sum_{u \in \mbox{supp}(f)} a_u u$, 
where $a_u \in K$. Let $u \in \mbox{supp}(f)$ and let us assume that none of 
$x_j$ divides $u$. $I$ is a monomial ideal and we have 
$fv \in I$, therefore we get $uv \in I$, i.e., 
$u \cdot (x_1^{a_1 - 1} x_2^{a_2 - 1} \dots x_k^{a_k - 1} x_{k+1}^{b_{k+1}}) \in I$. Since $I = \cap_{i=1}^r Q_i$, we get $I \subset Q_i$, for all $i \in [r]$. 
In particular, $I \subset Q$. Thus, we get that 
$u \cdot (x_1^{a_1 - 1} x_2^{a_2 - 1} \dots x_k^{a_k - 1} x_{k+1}^{b_{k+1}}) \in Q = \langle x_1^{a_1},\dots,x_k^{a_k} \rangle$, which is a contradiction, 
as power of $x_j$ in $uv$ is strictly less than $a_j$ for all $j \in [k]$. 
Hence, our assumption is wrong. Therefore, $\exists j \in [k]$ 
such that $x_j | u$. This implies that $u \in \langle x_1, \dots ,x_k \rangle = P$, 
i.e., $u \in P$,  for all $u \in \mbox{supp}(f)$. Therefore, $f \in P$.
\end{proof}

Note that if $k = n$, i.e., if $P = \langle x_1, \dots, x_n \rangle$, 
then for the $P$-primary component $Q = \langle x_1^{a_1},\dots,x_n^{a_n} \rangle$   
we have $v = x_1^{a_1 -1}\cdots x_n^{a_n -1}$, and we do not have to take care of any further variables in $v$.

\section{Uniqueness of $v$}
It is easy to see that $v$ need not be unique, 
as if we have $x_{s_j}$ appearing with the power $b_j$, 
we can have infinitely many $v$ with power of $x_{s_j}$ greater than 
$b_j$ and those should work as well. We can ask for 
uniqueness of $v$ in terms of the base and powers 
of variables dividing $v$. However, we have observed that 
such uniqueness is not possible in the general case. 
Let us first see an example to show that the uniqueness of 
base for $v$ fails. The following also shows that the minimum unique power for the variables of $v$ cannot be obtained for the general case. We have used \textit{Macaulay2} for understanding such examples.

\begin{example}
Let $S = \mathbb{Q}[x_1, \dots, x_6]$ and $I = \langle x_1x_3^5,x_2^4x_5^3,x_2^4x_4^4,x_1^5x_4^2,x_1x_6^8 \rangle$ be an 
monomial ideal in $S$. Then, $P = \langle x_1, x_2 \rangle \in \mbox{Ass}(I)$. 
For $P$, we can 
have $v = x_3^5x_4^4 \text{ or } x_3^5x_5^3 \text{ or } x_6^8x_4^4 \text{ or } x_6^8x_5^3$.
\end{example}
\medskip

The example shows that we cannot have the uniqueness of $v$, 
in terms of base or the powers of the variables occurring in $v$. 
This example also illustrates that we cannot find the minimum powers 
of the variables in $v$, such that all the powers greater than that 
power gives us $P = (I \colon v)$ and no power less than that gives 
$P = (I \colon v)$. Take $P = \langle x_1, x_2 \rangle \in Ass(I)$. 
For $P$, we can have $v = x_3^5x_4^4$, such that $P = (I \colon v)$. 
Here, power of $x_5$ is $0$, as we can write 
$v = x_3^5x_4^4 = x_3^5x_4^4x_5^0$. Moreover, 
we can see that $2 > 0$ as power of $x_5$ does not work if we consider 
the monomial $x_3^5x_5^2$. We do not get the expected output, as can 
be seen by simple computation with \textit{Macaulay2}. 
However, the following corollary gives us 
information about the nature of $P$ and $I$, when $v$ is unique.

\begin{proposition}
Let $I \subset S = K[x_1, \dots, x_n]$ be a monomial ideal and 
$P \in Ass(I)$. The monomial $v$ is unique for $P$ such that 
$P = (I \colon v)$ if and only if $P = \langle x_1, \dots, x_n \rangle$ 
and $P$ has unique $P$-primary component in irredundant irreducible 
decomposition of $I$.
\end{proposition}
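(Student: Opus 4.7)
The plan is to prove the two implications separately, combining Theorem \ref{3} (which parameterizes all valid $v$'s arising from the IID) with Proposition \ref{c} (which forces the exponents of the generators of $P$ appearing in $v$ to read off from a $P$-primary component of the IID).

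For the reverse implication, assume $P = \langle x_1, \ldots, x_n \rangle$ and that the IID of $I$ contains a unique $P$-primary component $Q = \langle x_1^{a_1}, \ldots, x_n^{a_n} \rangle$. Since every variable of $S$ already lies in $P$, the complementary set $[n] \backslash \{i_1, \ldots, i_k\}$ in Theorem \ref{3} is empty, so the theorem produces the single monomial $v_0 = x_1^{a_1 - 1} \cdots x_n^{a_n - 1}$, which satisfies $P = (I \colon v_0)$. For uniqueness, let $v' = x_1^{c_1} \cdots x_n^{c_n}$ be any monomial with $(I \colon v') = P$. Proposition \ref{c} (with $k = n$) produces a $P$-primary component of the IID of the form $\langle x_1^{c_1 + 1}, \ldots, x_n^{c_n + 1} \rangle$; by the uniqueness hypothesis this component must equal $Q$, forcing $c_j = a_j - 1$ for every $j$ and hence $v' = v_0$.

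For the forward implication I argue by contrapositive: if either $P \subsetneq \langle x_1, \ldots, x_n \rangle$ or the IID of $I$ contains two distinct $P$-primary components, I exhibit two distinct valid $v$'s. In the first case, choose a variable $x_{s_1} \notin P$; Theorem \ref{3} allows the exponent $b_1$ of $x_{s_1}$ to be any integer $\geqslant \max\{\nu_{s_1}(u) \mid u \in G(I)\}$, so varying $b_1$ produces infinitely many distinct valid monomials. In the second case, let $Q_1 \neq Q_2$ be two $P$-primary components in the IID; applying Theorem \ref{3} with each choice yields monomials $v_1$ and $v_2$ whose exponent on every generator $x_{i_j}$ of $P$ is one less than the corresponding exponent in $Q_1$ or $Q_2$, so $v_1$ and $v_2$ differ in at least one coordinate.

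The main conceptual point is the asymmetry in Proposition \ref{c}: it pins down only the exponents on the generators of $P$, leaving exponents of variables outside $P$ entirely free. This is precisely why the statement demands $P$ to be the maximal graded ideal, so that no extraneous variable survives to spoil uniqueness. Given this observation, the argument reduces to carefully invoking Theorems \ref{3} and \ref{c} in each case, which should be routine.
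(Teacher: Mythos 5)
Your proposal is correct and follows essentially the same route as the paper: the forward direction by exhibiting multiple $v$'s (varying the free exponent on a variable outside $P$ when $P$ is not maximal, and using Theorem \ref{3} on two distinct $P$-primary components otherwise), and the reverse direction by combining Theorem \ref{3} with Proposition \ref{c} to force any competing $v'$ to come from the same unique component. The only cosmetic difference is that you phrase the forward direction as a contrapositive over the disjunction while the paper argues by contradiction in two successive steps; the content is identical.
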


\begin{proof}
($\Rightarrow$) Suppose $P = \langle x_{i_1}, \dots, x_{i_k} \rangle$, 
for some $1 \leqslant k < n$. Let $Q = \langle x_{i_1}^{a_1}, \dots, x_{i_k}^{a_k} \rangle$ be a $P$-primary component in irredundant irreducible 
decomposition of $I$. Then, by Theorem \ref{3}, we have 
$$v = x_{i_1}^{a_1 - 1} x_{i_2}^{a_2 - 1} \dots x_{i_k}^{a_k - 1} x_{s_1}^{b_1} \dots x_{s_{n-k}}^{b_{n-k}},$$ 
and $b_j$'s are as defined in Theorem \ref{3}. We can have infinite choices 
for $b_j$'s, which give us infinitely many $v$'s, which is not possible. 
Hence, our supposition is wrong. Therefore, we get 
$P = \langle x_1, \dots, x_n \rangle$. Now, suppose $P = \langle x_1, \dots, x_n \rangle$ does not have a unique $P$-primary component in the 
irredundant irreducible decomposition of $I$. It is enough to consider the 
case when $P$ has two $P$-primary components in the irredundant irreducible decomposition, say $Q_1 = \langle x_1^{\alpha_1}, \dots, x_n^{\alpha_n} \rangle$ and $Q_2 = \langle x_1^{\beta_1}, \dots, x_n^{\beta_n} \rangle$. Then, $\alpha_i \neq \beta_i$ for some $1 \leqslant i \leqslant n$. By Theorem \ref{3}, we can have $v = x_1^{\alpha_1 - 1} \cdots x_n^{\alpha_n - 1}$ and $v' = x_1^{\beta_1 - 1} \cdots x_n^{\beta_n -1}$, corresponding to $Q_1$ and $Q_2$ respectively, 
such that $P = (I \colon v)$ and $P = (I \colon v')$. Note that 
$v \neq v'$ as $\alpha_i \neq \beta_i$ for some $1 \leqslant i \leqslant n$. 
Hence, we get two different monomials $v$ and $v'$ such that $P = (I \colon v)$ 
and $P = (I \colon v')$, which contradicts the given condition. Thus, $P$ must 
have a unique $P$-primary component in irredundant irreducible decomposition.
\medskip

($\Leftarrow$) Let $P = \langle x_1, \dots, x_n \rangle$ and let 
$Q = \langle x_1^{a_1}, \dots , x_n^{a_n} \rangle$ be the unique 
$P$-primary component in the irredundant irreducible decomposition 
$I = \cap_{i = 1}^r Q_i$. By Theorem \ref{3}, we can have 
$v = x_1^{a_1 -1} \cdots x_n^{a_n -1}$, such that $P = (I \colon v)$. 
Suppose that there exists $v'  = x_1^{b_1} \cdots x_n^{b_n}$, such that 
$v' \neq v$ and $P = (I \colon v')$. Since, $P = (I \colon v')$, by 
Proposition \ref{c}, we must have a $Q' = \langle x_1^{b_1 +1}, \dots, x_n^{b_n +1} \rangle$ in an irredundant irreducible decomposition of $I$. 
We observe that 
$v' \neq v$ implies $ b_j \neq a_j +1$ for some $j \in [n]$, therefore, 
$Q \neq Q'$. Thus, we get two $P$-primary components in an irredundant 
irreducible decomposition of $I$, which contradicts the given condition. 
Hence, $v$ is unique such that $P = (I \colon v)$.
\end{proof}

\section{Macaulay2 Function for Obtaining $v$}
Theorem \ref{4} gives us an explicit algorithm to find the $v$ for 
a monomial ideal $I$ and its associated prime ideal $P$, such that 
$P = (I \colon v)$. We have created the following function in 
\textit{Macaulay2}, which 
can be used to compute a $v$ for any given monomial ideal $I$ and 
an associated prime ideal $P$. This asks for an associated prime ideal $P$ 
and the $P$-primary component in the irredundant irreducible decomposition, 
in order to find the $v$. 
Note that, the input ideal must be  defined 
as a monomial ideal, as the function \texttt{irreducibleDecomposition} 
in \emph{Macaulay2} is defined only for the monomial ideals.
\medskip

Macaulay2, version 1.12.0.1
with packages: ConwayPolynomials, Elimination, IntegralClosure, InverseSystems,
               LLLBases, PrimaryDecomposition, ReesAlgebra, TangentCone
\begin{scriptsize}
\begin{verbatim}
i1 :  v = I -> (a = associatedPrimes I;
          if (#a == 1)
          then (<< "The associated prime ideal of given ideal is "<<ideal(a_0)<<endl ;
          P = ideal(a_0))
          else (<< "The associated prime ideals of given ideal are as follows : "<< endl;
          for i from 0 to (#a -1) do << "P_"<<i<<" = "<<a_i<<" ; "<< endl;
          e = read "Please enter the index number of P for which you want to find v : " ;
          P = ideal(a_(value(e))));
          l = flatten entries gens I;
          k = #l;
          h = flatten for i from 0 to (k -1) list exponents (l_i);
          t = for j from 0 to ((#gens S)-1) list (for i from 0 to (k-1) list h_i_j);
          d = for i from 0 to (#t -1) list (max t_i);
          b = irreducibleDecomposition I;
          p = for i from 0 to (#b -1) list (if radical(b_i) != P then continue; b_i);
          if (#p == 1)
          then (<< "The "<<P<<"-primary component in IID is "<<ideal(p_0)<<endl ;
          q = ideal(p_0))
          else (<< "The "<<P<<"-primary components of in IID are : "<< endl;
          for i from 0 to (#p -1) do << "Q_"<<i<<" = "<<p_i<<" ; "<< endl;
          r = read "Please enter the index number of Q for which you want to find v : " ;
          t = value r;
          q = ideal(p_(t)));
          w = (l = flatten entries gens q;
          k = #l;
          h = flatten for i from 0 to (k -1) list exponents (l_i);
          sum h);
          f = for i from 0 to (#w -1) list (
          if w_i != 0 then ((ring q)_i)^(w_i -1)
          else ((ring q)_i)^(d_i + (random ZZ)));
          product f)

o1 = v

o1 : FunctionClosure

i2 : S = QQ[x_1..x_8]

o2 = S

o2 : PolynomialRing

i3 : I = monomialIdeal(x_1^4,x_2^7,x_3^5,x_1^3*x_4^2,x_2^4*x_4^2,x_3*x_4^2,x_4^5,x_4^2*x_8^2,x_1*x_8^8)

                     4   7   5   3 2   4 2     2   5   2 2     8
o3 = monomialIdeal (x , x , x , x x , x x , x x , x , x x , x x )
                     1   2   3   1 4   2 4   3 4   4   4 8   1 8

o3 : MonomialIdeal of S

i4 : v(I)
The associated prime ideals of given ideal are as follows : 
P_0 = monomialIdeal (x , x , x , x ) ; 
                      1   2   3   4
P_1 = monomialIdeal (x , x , x , x , x ) ; 
                      1   2   3   4   8
Please enter the index number of P for which you want to find v : 0
                                                                   7   5   2
The ideal (x , x , x , x )-primary component in IID is ideal (x , x , x , x )
            1   2   3   4                                      1   2   3   4

      6 4   5 5 2 13
o4 = x x x x x x x
      2 3 4 5 6 7 8

o4 : S

i5 : I : oo

o5 = monomialIdeal (x , x , x , x )
                     1   2   3   4

o5 : MonomialIdeal of S

i6 : v(I)
The associated prime ideals of given ideal are as follows : 
P_0 = monomialIdeal (x , x , x , x ) ; 
                      1   2   3   4
P_1 = monomialIdeal (x , x , x , x , x ) ; 
                      1   2   3   4   8
Please enter the index number of P for which you want to find v : 1
The ideal (x , x , x , x , x )-primary components of in IID are : 
            1   2   3   4   8
                      4   7   5   2   8
Q_0 = monomialIdeal (x , x , x , x , x ) ; 
                      1   2   3   4   8
                      3   4       5   2
Q_1 = monomialIdeal (x , x , x , x , x ) ; 
                      1   2   3   4   8
Please enter the index number of Q for which you want to find v : 1

      2 3 4 2 8
o6 = x x x x x x
      1 2 4 5 7 8

o6 : S

i7 : I : oo

o7 = monomialIdeal (x , x , x , x , x )
                     1   2   3   4   8

o7 : MonomialIdeal of S
\end{verbatim}
\end{scriptsize}

\end{document}